\numberwithin{equation}{section}
\newtheorem{lemma}[equation]{Lemma}
\newtheorem{algorithm}[equation]{Algorithm}
\newtheorem{conj}[equation]{Conjecture}
\newtheorem{claim*}{Claim}
\newtheorem{defn}[equation]{Definition}
\newtheorem{example}[equation]{Example}
\newtheorem{question}[equation]{Question}
\theoremstyle{remark}
\newtheorem{remark}[equation]{Remark}
\newcommand{\lex}{\operatorname{lex}}
\newcommand{\im}{\operatorname{im}}
\newcommand{\Tor}{\operatorname{Tor}}
\newcommand{\HS}{\operatorname{HS}}
\newcommand{\rank}{\operatorname{rank}}
\newcommand{\Sym}{\operatorname{Sym}} %done
\newcommand{\GL}{\mathbf{GL}}
\newcommand{\defi}[1]{\textsf{#1}} % for defined terms
\newcommand{\PP}{\mathbb P}
\newcommand{\QQ}{\mathbb Q}
\newcommand{\ZZ}{\mathbb Z}
\newcommand{\CC}{\mathbb C}
\newcommand{\ba}{\mathbf a}
\newcommand{\bb}{\mathbf b}
\newcommand{\bd}{\mathbf d}
\newcommand{\rE}{\mathrm E}
\newcommand{\bF}{\mathbf F}
\newcommand{\bS}{\mathbf{S}}
\newcommand{\cO}{\mathcal O}
\newcommand{\dw}{\operatorname{domWeights}}
\title{Conjectures and computations about Veronese syzygies}
\author{Juliette Bruce}
\address{Department of Mathematics, University of Wisconsin, Madison, WI}
\email{\href{mailto:juliette.bruce@math.wisc.edu}{juliette.bruce@math.wisc.edu}}
\urladdr{\url{http://math.wisc.edu/~juliettebruce/}}
\author{Daniel Erman}
\address{Department of Mathematics, University of Wisconsin, Madison, WI}
\email{\href{mailto:derman@math.wisc.edu}{derman@math.wisc.edu}}
\urladdr{\url{http://math.wisc.edu/~derman/}}
\author{Steve Goldstein}
\address{Department of Mathematics, University of Wisconsin, Madison, WI}
\email{\href{mailto:sgoldstein@math.wisc.edu}{sgoldstein@math.wisc.edu}}
\author{Jay Yang}
\address{Department of Mathematics, University of Wisconsin, Madison, WI}
\email{\href{mailto:yangjay@math.wisc.edu}{yangjay@math.wisc.edu}}
\urladdr{\url{http://math.wisc.edu/~yangjay/}}
\date{\today}
\begin{document}
\thanks{
JB received support from the NSF GRFP under grant DGE-1256259, and from the Graduate School and the Office of the Vice Chancellor for Research and Graduate Education at the University of Wisconsin-Madison with funding from the Wisconsin Alumni Research Foundation.  DE received support from NSF grant DMS-1601619. JY received support from NSF grant DMS-1502553.
}
%amsart wants the abstract before \maketitle, otherwise you get a warning
\begin{abstract}
We formulate several conjectures which shed light on the structure of Veronese syzygies of projective spaces. Our conjectures are based on experimental data that we derived by developing a numerical linear algebra and distributed computation technique for computing and synthesizing new cases of Veronese embeddings for $\PP^2$. 
\end{abstract}
\maketitle
 
A central open question in the study of syzygies is to determine the Betti table of $\PP^n$ under the $d$-uple Veronese embedding. While the case $n=1$ is well understood -- the resolution is an Eagon-Northcott complex --  even the case $n=2$ is wide open. In this paper, we formulate several conjectures which shed light on the structure of Veronese syzygies of projective spaces. For instance, Conjecture~\ref{conj:domweights} predicts the most dominant torus (or Schur functor) weights that will arise in each entry of the Betti table of $\PP^n$ under any $d$-uple embedding. Our conjectures are based on experimental data gathered using new techniques for computings syzygies of Veronese embeddings of $\PP^2$. These techniques are based upon the use of numerical linear algebra and distributed computation.
 
For a fixed $n$, let $S=\CC[x_0,x_1,\dots,x_n]$ be the polynomial ring with the standard grading. We are primarily interested in syzygies in the $d$th Veronese subring of $S$, which we denote $S(0;d):=S^{(d)}=\oplus_{i\in \ZZ} S_{di}$. We consider $S(0;d)$ as an $R$-module, where $R=\Sym(S_d)$ is the symmetric algebra on the vector space $S_d$. Geometrically, this corresponds to computing the syzygies of $\PP^n$ under the $d$-uple embedding $\PP^n \to \PP^{\binom{n+d}{d}-1}$. 

Since Green's landmark~\cite{green-I}, the syzygies of a variety are often studied in parallel with the syzygies of the other line bundles on the variety, as this provides a unifying perspective (see also \cite[Theorem~2.2]{green-II},~\cite[Theorem~2]{ein-lazarsfeld-np}, \cite[Theorem~4.1]{ein-lazarsfeld}). Accordingly, we set $S(b;d):=\oplus_{i\in \ZZ} S_{di+b}$ as an $R$-module; this is the graded $R$-module associated to the pushforward of $\cO_{\PP^n}(b)$ under the $d$-uple embedding. 

We analyze the Betti numbers of $S(b;d)$, as well as multigraded and equivariant refinements. We write
\begin{align*}
K_{p,q}(\PP^n, b;d)&=\Tor^{R}_{p}(S(b;d),\CC)_{p+q}=\CC^{\beta_{p,p+q}(\PP^n,b;d)}.\\
\intertext{Thus $\beta_{p,p+q}(\PP^n,b;d)$ denotes the vector space dimension of $K_{p,q}(\PP^n,b;d)$. The natural linear action of $\GL_{n+1}(\CC)$ on $S$ induces an action on $K_{p,q}(\PP^n, b;d)$, and so we can decompose this as a direct sum of Schur functors of total weight $d(p+q)+b$ i.e.}
K_{p,q}(\PP^n, b;d) &= \bigoplus_{\substack{\lambda \text{ of weight }\\ d(p+q)+b}} \bS_{\lambda}(\CC^{n+1})^{\oplus m_{\lambda}},\\
\intertext{where $\bS_{\lambda}$ is the Schur functor corresponding to the partition $\lambda$~\cite[p. 76]{fulton-harris}. This is the \defi{Schur decomposition} of $K_{p,q}(\PP^n, b;d)$, and is the most compact way to encode the syzygies.  Specializing to the action of $(\CC^*)^{n+1}$, gives a decomposition of $K_{p,q}(\PP^n, b;d)$ into a sum of $\ZZ^{n+1}$-graded vector spaces of total weight $d(p+q)+b$. Specifically, writing $\CC(-\mathbf{a})$ for the vector space $\CC$ together with the $(\CC^*)^{n+1}$-action given by $(\lambda_0,\lambda_1,\dots,\lambda_n)\cdot \mu = \lambda_0^{a_0}\lambda_1^{a_1}\cdots \lambda_n^{a_n}\mu$ we have}
K_{p,q}(\PP^n, b;d) &= \bigoplus_{\substack{\mathbf{a}\in \ZZ^{n+1} \\|\mathbf{a}|=d(p+q)+b}} \CC(-\mathbf{a})^{\oplus \beta_{p,\mathbf{a}}(\PP^n,b;d)}
\end{align*}
as a $\ZZ^{n+1}$-graded vector spaces, or equivalently as $(\CC^*)^{n+1}$ representations. This is referred to as the \defi{multigraded decomposition} of $K_{p,q}(\PP^n, b;d)$.

We are motivated by three main questions. The most ambitious goal is to provide a full description of the Betti table of every Veronese embedding in terms of Schur modules.
\begin{question}[Schur Modules]\label{q:schur}
Compute the Schur module decomposition of $K_{p,q}(\PP^n, b;d)$.
\end{question}
Almost nothing is known, or even conjectured, about this question, even in the case of $\PP^2$. Our most significant conjecture provides a first step towards an answer to this question. Specifically, Conjecture~\ref{conj:domweights} proposes an explicit prediction for the Schur modules $\bS_{\lambda}\subseteq K_{p,q}(\PP^n,b;d)$ with the most dominant weights.

Our second question comes from Ein and Lazarsfeld's~\cite[Conjecture 7.5]{ein-lazarsfeld} and is related to more classical questions about Green's $N_p$-condition for varieties~\cite{green-I, ein-lazarsfeld-np}:
\begin{question}[Vanishing]\label{q:vanishing}
When is $K_{p,q}(\PP^n, b;d)=0$?
\end{question}
Our Conjecture~\ref{conj:domweights} would also imply ~\cite[Conjecture 7.5]{ein-lazarsfeld}, and thus it offers a new perspective on Question~\ref{q:vanishing}.  Conjecture~\ref{conj:domweights} is based on a construction of monomial syzygies, introduced in~\cite{eel-quick}. Our new data suggests a surprisingly tight correspondence between the dominant weights of $K_{p,q}(\PP^n,b;d)$ and the monomial syzygies constructed in~\cite{eel-quick}, and that there is much more to be understood from this simple monomial construction.

\noindent Our third question is inspired by Ein, Erman, and Lazarsfeld's conjecture that each row of these Betti tables converges to a normal distribution~\cite[Conjecture~B]{eel-betti}.
\begin{question}[Quantitative Behavior]\label{q:normal distribution}
Fix $n,q$ and $b$. 
\begin{enumerate}[noitemsep]
	\item Can one provide any reasonable quantitative description or bounds on $K_{p,q}(\PP^n, b; d)$, either for a fixed $d$ or as $d\to \infty$?
	\item More specifically, does the function $p\mapsto \dim K_{p,q}(\PP^n, b; d)$, when appropriately scaled, converge to a normal distribution as $d\to \infty$?
\end{enumerate}
\end{question}
We provide some of the first evidence for the normally distributed behavior conjectured in~\cite[Conjecture~B]{eel-betti} -- see Figure~\ref{fig:normal distribution} and \S\ref{subsec:normal dist}. 

Additionally we produce an array of new conjectures related to Questions~\ref{q:schur} and \ref{q:normal distribution}, including conjectures on: Boij-S\"oderberg coefficients; the number of (disctinct) Schur modules appearing in $K_{p,q}(\PP^n, b;d)$; and a Schur functor interpretation of the conjecture of~\cite[\S8.3]{wcdl}.  Our conjectures are based on new experimental data about the $K_{p,q}(\PP^2, b;d)$ that arose from large-scale, systematic computations.  Taken together, these new conjectures sharpen our understanding of Veronese syzygies, and provide tangible projects to explore. 

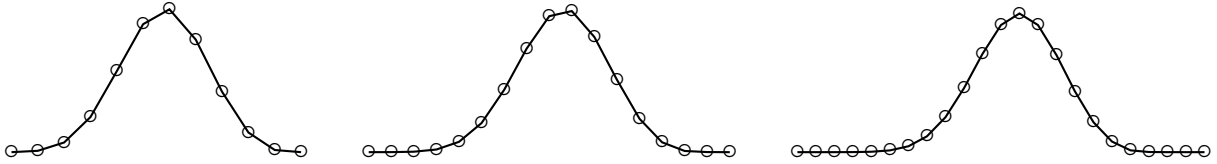
\begin{figure}
\begin{center}
\begin{tikzpicture}[yscale=2.4,xscale = .35]
\draw (0,0) node{$\circ$};
\draw (1,.0075) node{$\circ$};
\draw (2,.0536) node{$\circ$};
\draw (3,.1947) node{$\circ$};
\draw (4,.4488) node{$\circ$};
\draw (5,.7095)node{$\circ$};
\draw (6,.792) node{$\circ$};
\draw (7,.6237) node{$\circ$};
\draw (8,.3344) node{$\circ$};
\draw (9,.1089) node{$\circ$};
\draw (10,.012) node{$\circ$};
\draw (11,0) node{$\circ$};
\draw[-,thick] (0,0)--(1,.0075)--(2,.0536)--(3,.1947)--(4,.4488)--(5,.7095)--(6,.792)--(7,.6237)--(8,.3344)--(9,.1089)--(10,.012)--(11,0);
\end{tikzpicture}\quad \begin{tikzpicture}[yscale=.033,xscale = .30]
\draw (0,0) node{$\circ$};
\draw (1,.0165) node{$\circ$};
\draw (2,.1830) node{$\circ$};
\draw (3,1.0710) node{$\circ$};
\draw (4,4.1616) node{$\circ$};
\draw (5,11.7300) node{$\circ$};
\draw (6,25.0920)node{$\circ$};
\draw (7,41.7690)node{$\circ$};
\draw (8,54.8080) node{$\circ$};
\draw (9,56.8854) node{$\circ$};
\draw (10,46.4100) node{$\circ$};
\draw (11,29.1720) node{$\circ$};
\draw (12,13.4640) node{$\circ$};
\draw (13,3.9780) node{$\circ$};
\draw (14,.4855) node{$\circ$};
\draw (15,.0375) node{$\circ$};
\draw (16,0) node{$\circ$};
\draw[-,thick] (0,0)--(1,.0165)--(2,.1830)--(3,1.0710)--(4,4.1616)--(5,11.7300)--(6,25.0920)--(7,41.7690)--
(8,54.8080)--(9,56.8854)--(10,46.4100)--(11,29.1720)--(12,13.4640)--(13,3.9780)--(14,.4855)--(15,.0375)--(16,0);
\end{tikzpicture}\quad \begin{tikzpicture}[yscale=.023,xscale = .246]
\draw (0,0) node{$\circ$};
\draw (1,.000315) node{$\circ$};
\draw (2,.00495) node{$\circ$};
\draw (3,.04185) node{$\circ$};
\draw (4,.24012) node{$\circ$};
\draw (5,1.02465) node{$\circ$};
\draw (6,3.4155)node{$\circ$};
\draw (7,9.16493)node{$\circ$};
\draw (8,20.1894) node{$\circ$};
\draw (9,36.9899)node{$\circ$};
\draw (10,56.8319) node{$\circ$};
\draw (11,73.5471) node{$\circ$};
\draw (12,80.2332) node{$\circ$};
\draw (13,73.5471) node{$\circ$};
\draw (14,56.1632) node{$\circ$};
\draw (15,35.102) node{$\circ$};
\draw (16,17.3052) node{$\circ$};
\draw (17,6.1303) node{$\circ$};
\draw (18,1.04993) node{$\circ$};
\draw (19,.091855) node{$\circ$};
\draw (20,.01141) node{$\circ$};
\draw (21,.000945) node{$\circ$};
\draw (22,0) node{$\circ$};
\draw[-,thick] (0,0)--(1,.000315)--(2,.00495)--(3,.04185)--(4,.24012)--(5,1.02465)--(6,3.4155)--(7,9.16493)-- (8,20.1894)--(9,36.9899)--(10,56.8319)--(11,73.5471)--(12,80.2332)--(13,73.5471)--(14,56.1632)--(15,35.102)--(16,17.3052)--(17,6.1303)--(18,1.04993)--(19,.091855)--(20,.01141)--(21,.000945)--(22,0);
\end{tikzpicture}
\end{center}
\caption{Plots of $p\mapsto \dim K_{p,1}(0;d)$ for $d=4,5,$ and $6$ suggest that the Betti numbers of the quadratic strand of the Veronese embeddings of $\PP^2$ convergence towards a normal distribution as $d\to \infty$.}
\label{fig:normal distribution}
\end{figure}

We computed the $K_{p,q}(\PP^2,b;d)$ spaces for all $p,q$ and essentially\footnote{Due to the ongoing nature of this experiment, the end goal is a moving target. As of the writing of this paper, a few multigraded entries for $d=6$ and $b=4,5$ were still running, and a few Schur functor entries for $d=6$ and $b=0,3$ were unprocessed. On the other hand, we also produced some data for $d=7,8$.} all $0\leq b\leq d\leq 6$, as well as the corresponding Schur module decompositions and multigraded Hilbert series. For comparison: our Macaulay2 computations did not terminate for $d=5$ and $b=0$; this case, including multigraded decompositions, was recently computed by \cite{greco-vero-d5};  and the case $d=6$ and $b=0$, including multigraded decompositions, was computed even more recently \cite{wcdl}.  The main contribution of our experimental data is thus its comprehensiveness, as we include the pushforwards of other line bundles, the Schur functor decompositions, and more.

\begin{figure}
\[
\begin{array}{c|c|c|c}
&&\# \text{ of Relevant} & \text{Largest} \\ 
d&b& \text{Matrices}& \text{Matrix}\\\hline
\multirow{6}{*}{6}&0& 1,028 &596,898\times 1,246,254\\ \cline{2-4}
&1&148 &7,345 \times 9,890\\ \cline{2-4}
&2&148 &7,345 \times 9,890\\ \cline{2-4}
&3&1,028 &596,898\times 1,246,254\\ \cline{2-4}
&4& 1,753 & 4,175,947\times 12,168,528\\ \cline{2-4}
&5&1,753 &4,175,947\times 12,168,528\\ \hline
\end{array}
\]
\caption{This table summarizes data about the matrices involved in our computations of the Veronese syzygies of $\PP^2$ when $d=6$. We include it here to give a hint of the scale of computation involved. See \S\ref{sec:main computation} for more details.}
\label{fig:d6}
\end{figure}

Our computation is not based on new mathematical ideas, but rather in the synthesis of known results and the coordinated execution of many elementary steps. Since Betti numbers are Tor groups, they can be computed in  two ways. The standard method is to use symbolic algebra algorithms to compute a minimal free resolution, and to derive the Betti table from this resolution~\cite[Chapter 2]{M2-book}. This method is quite computationally intensive, and does not terminate for $d\geq 5$. 

A second method is to compute the cohomology of the Koszul complex, which reduces the computation of these Tor groups to linear algebra (see \S\ref{sec:computational approach} below). Despite this reduction long being know we are aware of only one large-scale effort at using it to compute Betti numbers  \cite{wcdl}. This is likely because, even for relatively simple cases, the problem remains quite complicated; the matrices quickly become massive and numerous. 

The crux of our technique is the use of high-speed high-throughput computing to compute multigraded Betti numbers. This allows us to compute each multigraded Betti number in parellel, relying on numerical linear algebra algorithms, in particular an LU-decomposition algorithm~\cite{LUSOL}. These algorithms are numerical in nature, and so rounding errors may creep in. However, our primary interest is in the testing and development of conjectures, so we do not require the precision of symbolic computation. Moreover, as discussed in \S\ref{sec:post processing}, we can often correct for minor errors through a post-processing step, which converts the multigraded decomposition into the Schur functor decomposition.

We have made our experimental data public in several formats. This includes a public database:
\href{https://syzygydata.com}{syzygydata.com} where the results of all computations have been presented and organized. It also includes a Macaulay2 package (in preparation) that incorporates the output of all computations. Our goal is to make our data readily accessible to others in hope of spurring further work on Veronese syzygies. 

This paper is organized as follows. \S\ref{sec:background} provides background and notation. \S\ref{sec:computational approach} gives an outline of our computation. This is elaborated upon in \S\ref{sec:precomputation} -- \S\ref{sec:post processing}, as we feel it may be useful for those interested in pursuing similar large-scale distributed computations. \S\ref{sec:conjectures} contains our main experimental results, including conjectures on dominant schur modules \S\ref{subsec:dominant}) evidence for the normal distribution conjecture \S\ref{subsec:normal dist}, discussion of Boij-S\"oderberg coefficients \S\ref{subsec:BS coeffs}, unimodality conjectures \S\ref{subsec:unimodal}, and a discussion of the redundancy of Betti numbers \S\ref{subsec:redundancy}.

\section*{Acknowledgments}
First of all, we thank Rob Lazarsfeld, as he helped inspire us to pursue this project in the first place. We thank Claudiu Raicu, for advice and code related to the Schur functor computations. We thank Michael Saunders and Stephen Wright for their helpful advice on using numerical methods to perform rank computations. We also Lawrence Ein, David Eisenbud, Frank-Olaf Schreyer, Mike Stillman, and the many other people who offered suggestions about this work.

This research was performed using the compute resources and assistance
of the UW-Madison Center For High Throughput Computing (CHTC) in the
Department of Computer Sciences~\cite{CHTC}. The CHTC is supported by UW-Madison,
the Advanced Computing Initiative, the Wisconsin Alumni Research
Foundation, the Wisconsin Institutes for Discovery, and the National
Science Foundation, and is an active member of the Open Science Grid,
which is supported by the National Science Foundation and the
U.S. Department of Energy's Office of Science.

%%%%%%%%%%%%%%%%%%
%%%%%%%%%%%%%%%%%%
\section{Mathematical Background}\label{sec:background}
%%%%%%%%%%%%%%%%%%
%%%%%%%%%%%%%%%%%%

\subsection{Betti Number Notation}
Notation for Betti numbers can be confusing, so we outline our notation and discuss how it relates to other common notations. Throughout $S=\CC[x_0,x_1,\dots,x_n]$. Our computations center on the case $n=2$ and thus in \S\ref{sec:computational approach}--\S\ref{sec:post processing} we restrict to the case $n=2$. Given some $d\geq 1$ we let $R=\Sym(S_d)$ be the symmetric algebra, which is a polynomial ring on $\dim S_d$ many variables. While $R$ depends on the choice of $d$, we often abuse notation and omit reference to $d$.

We use $S(0;d)$ to denote the $d$th Veronese subring $S^{(d)}=\oplus_{i} S_{di}\subseteq S$, and we view $S(0;d)$ as an $R$-module. The ring $S(0;d)$ is the homogeneous coordinate ring of the image of $\PP^n$ under $d$-uple embedding $\iota\colon \PP^n\xhookrightarrow{} \PP^{\binom{n+d}{d}-1}$. We set $S(b;d):=\oplus_{i} S_{b+di}$, which is the graded $R$-module corresponding to the pushforward $\iota_*\cO_{\PP^n}(b)$. 

For the standard graded structure, we set $K_{p,q}(\PP^n, b;d)=\Tor_p(S(b;d), \CC)_{p+q}$. Using standard Betti number notation, we have $\beta_{p,p+q}(\PP^n,b;d)=\beta_{p,p+q}(S(b;d))=\dim K_{p,q}(\PP^n, b;d)$. The \defi{Betti table} of $(n,b;d)$ is then the table where $\beta_{p,p+q}(S(b;d))$ is placed in the $(p,q)$-spot. 

For the multigraded structure, we write $K_{p,q}(\PP^n, b;d)_\ba=\Tor_p(S(b;d), \CC)_\ba$ where $\ba\in \ZZ^{n+1}$ is a multidegree. In this notation we must have that $|\ba|=d(p+q)+b$. We also use the Betti number notation $\beta_{p,\ba}(\PP^n,b;d)=\beta_{p,\ba}(S(b;d))=\dim K_{p,q}(\PP^n, b;d)_\ba$. Note the standard graded Betti numbers are recoverable from the multigraded Betti numbers via the equation
\[
\beta_{p,p+q}(\PP^n,b;d) = \sum_{\substack{\ba\in \ZZ^{n+1}\\ |\ba|=(p+q)d+b}} \beta_{p,\ba}(\PP^n,b;d).
\]
A useful way to keep track of the multigraded Betti numbers is via the $\ZZ^{n+1}$-graded Hilbert series. In general if $M$ is a $\ZZ^{n+1}$-graded module then we use $\HS_M(t_0,t_1,\dots,t_{n})$ to denote the multigraded Hilbert series. This is particularly convenient for encoding the multigraded structure of the multigraded vector space $K_{p,q}(\PP^n, b;d)$, as we can write
\[
HS_{K_{p,q}(\PP^n, b;d)}(t_0,t_1,\dots,t_n) = \sum_{\substack{\ba \in \ZZ^{n+1} \\ |\ba|= d(p+q)+b}} \beta_{p,\ba}(S(b;d))t^\ba
\]
where if $\ba=(a_0,a_1,\dots,a_n)$ then $t^{\ba}:=t_0^{a_0}t_1^{a_1}\cdots t_n^{a_n}$.

\begin{remark}
Since we will only consider the case $n=2$ for much of the paper, we often write $K_{p,q}(b;d):=K_{p,q}(\PP^2,b;d)$. We similarly abbreviate the notation for the multigraded Betti numbers in the cases where we are working with $\PP^2$.
\end{remark}

These notions of Betti numbers, and the relations between them, are perhaps best understood through an example. 

\begin{example}\label{ex:K11 d3}
Consider $\PP^2\subseteq \PP^{9}$ embedded by $\cO_{\PP^2}(3)$. The Betti table of $S(0;3)$ is
\[
\begin{matrix}
1&-&-&-&-&-&-&-\\
-&{\mathbf{27}}&105&189&189&105&27&-\\
-&-&-&-&-&-&-&1\\
\end{matrix}.
\]
Focusing on the boldfaced $27$, we have $K_{1,1}(0;3)=\CC^{27}$ and $\beta_{1,2}(0;3)=27$. As a $\ZZ^3$-graded vector space, $K_{1,1}(0;3)$ has 19 distinct multidegrees, which we encode via the Hilbert series
\[
\HS_{K_{1,1}(0;3)}(t_0,t_1,t_2)= \begin{array}{l}{t}_{0}^{4} {t}_{1}^{2}+{t}_{0}^{3} {t}_{1}^{3}+{t}_{0}^{2} {t}_{1}^{4}+{t}_{0}^{4} {t}_{1} {t}_{2}+2 {t}_{0}^{3}
   {t}_{1}^{2} {t}_{2}+2 {t}_{0}^{2} {t}_{1}^{3} {t}_{2}+{t}_{0} {t}_{1}^{4} {t}_{2}+{t}_{0}^{4} {t}_{2}^{2}+2
   {t}_{0}^{3} {t}_{1} {t}_{2}^{2}\\
   +3 {t}_{0}^{2} {t}_{1}^{2} {t}_{2}^{2}
   +2 {t}_{0} {t}_{1}^{3}
   {t}_{2}^{2}+{t}_{1}^{4} {t}_{2}^{2}+{t}_{0}^{3} {t}_{2}^{3}+2 {t}_{0}^{2} {t}_{1} {t}_{2}^{3}+2 {t}_{0}
   {t}_{1}^{2} {t}_{2}^{3}+{t}_{1}^{3} {t}_{2}^{3}+{t}_{0}^{2} {t}_{2}^{4}\\+{t}_{0} {t}_{1} {t}_{2}^{4}+{t}_{1}^{2}
   {t}_{2}^{4}.
   \end{array}
\]
Thus for instance $K_{1,1}(0;3)_{(4,2,0)}=\CC$ and $K_{1,1}(0;3)_{(2,2,2)}=\CC^3$.
%As a Schur module, $K_{1,1}(0;3)$ is isomorphic to the irreducible representation $\bS_{(4,2,0)}$.
\end{example}

\subsection{Schur Modules and Dominant Weights}

We also consider the Schur functor decomposition of $K_{p,q}(\PP^n, b;d)$ arising from the linear action of $\GL_{n+1}(\CC)$ on $S$, and so briefly review the relevant notation and terminology. See~\cite{fulton-harris} for a review of this material.

If $\lambda=(\lambda_0\geq \lambda_2\geq \cdots \geq \lambda_n)$ is a partition of weight $|\lambda|=\lambda_0+\lambda_1+\cdots+\lambda_n$ we write $\bS_{\lambda}=\bS_{\lambda}(\CC^{n+1})$ for the corresponding Schur functor, which is a representation of $\GL_{n+1}(\CC)$. The Schur functor decomposition of $K_{p,q}(\PP^n, b;d)$ can be expressed as:
\[
K_{p,q}(\PP^n, b;d) = \bigoplus_{\substack{|\lambda|=d(p+q)+b}} \bS_{\lambda}(\CC^{n+1})^{\oplus m_{p,\lambda}(\PP^n,b;d)},
\]
with the $m_{p,\lambda}(\PP^n,b;d)=m_{p,\lambda}(n,b;d)$ being the Schur functor multiplicities. The Schur functor decomposition is recoverable from the multigraded Betti numbers (see Algorithm~\ref{alg:post process schur}).

Given $\lambda=(\lambda_1,\lambda_2,\dots,\lambda_a)$ and $\lambda'= (\lambda'_1,\lambda'_2,\dots,\lambda'_b)$ we say that $\lambda$ \defi{dominates} $\lambda'$ (or $\lambda \succeq \lambda'$) if $\sum_{i=1}^k \lambda_i \geq \sum_{i=1}^k \lambda'_k$ for all $k\geq \max\{a,b\}$. This induces a partial order on $\ZZ^{n+1}$, and given a subset $W\subset \ZZ^{n+1}$ we often write $\dw W$ for the set of dominant weights in $W$.

\begin{example}
Consider the Schur functor decomposition of $K_{14,1}(5;0)$, which appears in Appendix~2: %%%%WARNING THIS REF IS HARDCODED!!!
\[
K_{14,1}(5;0)\cong{\bS_{(34,21,20)}}\oplus{\bS_{(33,25,17)}}\oplus\bS_{(33,24,18)}\oplus \cdots
\]
The weight $(33,24,18)$ is dominated by $(33,25,17)$ but is not dominated by $(34,21,20)$.  In this case, the two maximally dominant weights are $(34,21,20)$ $(33,25,17)$.
\end{example}

\subsection{Monomial syzygies}
In~\cite{eel-quick}, Ein, Erman, and Lazarsfeld use monomial techniques to construct nonzero elements of $K_{p,q}(\PP^n, b;d)$ for a wide range of values of the parameters. The basic idea behind the construction is the following: First, one replaces the Veronese ring $S^{(d)}=S(0;d)$ by the Veronese of a quotient $\overline{S}(0;d):=(S/(x_0^d, x_1^d,\dots,x_n^d))^{(d)}$. Writing $\overline{S_d}$ for the quotient vector space $S_d/((x_0^d, x_1^d,\dots,x_n^d)$, a standard Artinian reduction argument induces a natural isomorphism between the syzygies of $\overline{S}(0;d)$, resolved over $\Sym(\overline{S_d})$ and the syzygies of $S^{(d)}$ resolved over $\Sym(S_d)$. A similar statement holds for $S(b;d)$, and thus to produce nonzero elements of $K_{p,q}(\PP^n, b;d)$ it is enough to produce nonzero syzygies of $\overline{S}(b;d)$.

Ein, Erman, and Lazarsfeld produce monomial syzygies via the following recipe: for some degree $e$ let $f$ be the lex-leading monomial of degree $e$ that is nonzero in $\overline{R}$. For instance, if $e=d$ then we would take $f=x_0^{d-1}x_1$. Next, let $m_1,\dots,m_s$ be distinct monomials in $\overline{R}$ such that $m_if=0$ in $\overline{R}$ for all $1\leq i \leq s$. For instance, $m_i$ could be any monomial divisible by $x_0$. Let $\zeta:= m_1\wedge m_2\wedge \cdots \wedge m_s \otimes \in \bigwedge^s \overline{S_d}\otimes \overline{S_d}$. Then $\zeta$ will induce a cycle in the appropriate sequence of the form~\eqref{eqn:3 term}. Under mild restrictions on the $m_i$, one also shows that $\zeta$ is not a boundary, and hence it induces a nonzero element the homology group $K_{p,q}(\PP^n, b;d)$.

We write $\mathrm E_{p,q}(\PP^n, b;d)$ for the vector space of monomial syzygies constructed in~\cite{eel-quick}. This is a $\ZZ^{n+1}$-graded vector space, and hence we can also discuss the dominant weights of this space, which we denote by $\dw \mathrm E_{p,q}(\PP^n, b;d)$.

%%%%%%%%%%%%%%%
\section{Overview of computational Approach}\label{sec:computational approach}
%%%%%%%%%%%%%%%%
For our computations, we focus on the case of $\PP^2$. The standard computational approach involves computing a minimal free resolution for $M$, but the complexity grows quite quickly with $d$. For instance $d=2,3,4$ are easily computable in Macaulay2, but our computation did not terminate for $d= 5$. 

We take a different approach, relying on linear algebra computations to determine the Betti table. 
Using the Koszul complex to compute $\Tor$-groups, we have that the vector space $K_{p,q}(b;d)$ is the cohomology of the complex:
\begin{equation}\label{eqn:3 term}
\begin{tikzcd}[column sep = 3em]
\bigwedge^{p+1}S_d\otimes S_{b+(q-1)d}\rar{\partial_{p+1}}& \bigwedge^{p}S_d\otimes S_{b+qd}\rar{\partial_{p}}&\bigwedge^{p-1}S_d\otimes S_{b+(q+1)d}
\end{tikzcd}
\end{equation}
where $\partial_{p}$ is defined by:
\[
\partial_p\left(m_1\wedge m_2\wedge\cdots\wedge m_p\otimes f\right)=\sum_{k=1}^p(-1)^km_1\wedge m_2\wedge\cdots\wedge\widehat{m}_k\wedge\cdots\wedge m_p\otimes (m_kf).
\]
Since the differential respects the $\ZZ^3$-multigrading, it suffices to separate \eqref{eqn:3 term} into multigraded strands. Thus for each $\mathbf{a}=(a_0,a_1,a_2)\in \ZZ^3$ where $a_0+a_1+a_2=b+d(p+q)$, we have that $K_{p,q}(b;d)_{\ba}$ is the cohomology of
\begin{equation}\label{eqn:3 term multigraded}
\begin{tikzcd}[column sep = 3em]
\left(\bigwedge^{p+1}S_d\otimes S_{b+(q-1)d}\right)_{\mathbf{a}}\rar{\partial_{p+1,\mathbf{a}}}& \left(\bigwedge^{p}S_d\otimes S_{b+qd}\right)_{\mathbf{a}}\rar{\partial_{p,\mathbf{a}}}&\left(\bigwedge^{p-1}S_d\otimes S_{b+(q+1)d}\right)_{\mathbf{a}}.
\end{tikzcd}
\end{equation}
This reduces computing the multigraded (or graded) Betti numbers of $M$ to the computation of a large number of individual matrices. Note if we choose bases for the source and target consisting of monomials each $\partial_{p,\ba}$, is represented matrices whose entries are either $0$ or $\pm 1$. 

\begin{example}
Consider $K_{2,2}(0;3)_{(7,3,2)}$, which is one of the multigraded entries for the structure sheaf $\cO_{\PP^2}$ under the $3$-uple embedding. To compute this, we first construct the matrix $\partial_{2,(7,3,2)}$. We choose products of monomials for a basis on both the source and target. For instance $x^3\wedge x^2y \otimes x^2y^2z^2\in \left(\bigwedge^2 S_3 \otimes S_3\right)_{(7,3,2)}$ is a basis vector in the source. We have
\[
\partial_{2,(7,3,2)}(x^3\wedge x^2y \otimes x^2y^2z^2) = x^3\otimes x^4y^3z^2 - x^2y\otimes x^5y^2z^2.
\]
Working over all such monomial, we represent $\partial_{2,(7,3,2)}$ by a matrix
\[
\bordermatrix{& &x^3\wedge x^2y\otimes x^2y^2z^2 \ \ & \ \ x^3\wedge xy^2\otimes x^3yz^2 \ \ & \\ x^3\wedge x^2z\otimes x^2y^3z \\ & \cdots \cr
x^3\otimes x^4y^3z^2 &&1&1&1&\cdots\cr
 x^2y\otimes x^5y^2z^2 &&-1&0&0&\cdots\cr
 x^2z\otimes x^5y^3z&&0&0&-1&\cdots\cr   
 xy^2\otimes x^6y^2z^2 &&0&-1&0&\cdots\cr
         && \vdots&\vdots&\vdots&\ddots \cr
}.
\]
The dimension of $K_{2,2}(0;3)_{(7,3,2)}$ is determined by the ranks and sizes of these matrices.  Since $\partial_{2,(7,3,2)}$ has $23$ columns, we have
\begin{align*}
\dim K_{2,2}(0;3)_{(7,3,2)} &= \dim \ker \partial_{2,(7,3,2)}-\dim \im \partial_{3,(7,3,2)}\\
&=\left(23 -\rank \partial_{2,(7,3,2)}\right) - \rank \partial_{3,(7,3,2)}\\
&=23 -8 -15 =0.
\end{align*}
\end{example}

Our computational approach can be summarized as follows:
\begin{enumerate}[noitemsep]
	\item {\bf Precomputation:} We use known vanishing results and facts about Hilbert series to reduce the number of matrices whose rank we need to compute. We also use standard duality results to focus on simpler matrices in some cases. 
	\item {\bf Main computation:} We construct the remaining relevant matrices, and use an LU-decomposition algorithm and distributed, high throughput computations to compute the ranks of those matrices.
	\item {\bf Post-processing:} We assemble our data to produce the total the multigraded Betti numbers and the standard Betti numbers , and we apply a highest weight decomposition algorithm to obtain the Schur module decompositions.
\end{enumerate}
While the largest computational challenges come from the main computation step, the scale of our data creates some challenges in executing and coordinating the other steps. In the following sections, we describe the relevant issues in some detail.

\begin{remark}
With the exception of the rank computations, all other steps are symbolic in nature. However, since we use a numerical algorithm to compute the ranks of the matrices, there is potential for numerical error in that step. See \S\ref{subsec:QR}. In post-processing, we decompose the $K_{p,q}$ space into Schur modules, and this can correct small numerical errors. See \S\ref{subsec:numerical error}.
\end{remark}

\section{Precomputation}\label{sec:precomputation}
\subsection{Determining the relevant range of Betti numbers}\label{subsec:relevant range}
A number of the $K_{p,q}$ spaces are entirely determined by combining the $\ZZ^3$-graded Hilbert series with known vanishing results. The following lemma is well-known to experts, but we include for reference. 

\begin{lemma}\label{lem:rational function}
The $\ZZ^3$-graded Hilbert series for $S(b;d)$ is a rational function of the form $A(t_0,t_1,t_2)/B(t_0,t_1,t_2)$ where
\[
A(t_0,t_1,t_2) = \sum_{p,\ba} (-1)^p\dim K_{p,q}(b;d)_{\ba}t^{\ba} \text{ and } B(t_0,t_1,t_2) = \prod_{\bb\in \mathbb N^3, |\bb|=d} 1-t^\bb.
\]
\end{lemma}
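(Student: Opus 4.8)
The plan is to compute the graded Hilbert series of $S(b;d)$ as an $R$-module by resolving it and using additivity of Hilbert series along an exact complex. Recall that $R = \Sym(S_d)$ is a polynomial ring whose variables are in natural bijection with the monomials $\bb \in \mathbb N^3$ with $|\bb| = d$; assigning to the variable $t^{\bb}$ the multidegree $\bb \in \ZZ^3$ makes $R$ a $\ZZ^3$-graded ring, and the inclusion $S(b;d) \hookrightarrow S$ is a map of $\ZZ^3$-graded $R$-modules (with $R$ acting on $S$ through multiplication of polynomials). So the first step is simply to observe that $\HS_R(t_0,t_1,t_2) = \prod_{\bb \in \mathbb N^3,\, |\bb| = d} \frac{1}{1 - t^{\bb}} = 1/B(t_0,t_1,t_2)$, which is the standard computation of the multigraded Hilbert series of a polynomial ring.

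Next I would take a minimal $\ZZ^3$-graded free resolution $F_\bullet \to S(b;d)$ over $R$. Because $R$ is graded in a way that is compatible with positivity (the multidegrees of the variables all have $|\bb| = d > 0$), such a resolution exists, is finite (Hilbert syzygy theorem, as $R$ is a polynomial ring), and its terms are finitely generated in each multidegree. By the definition of Betti numbers recalled in the excerpt, $F_p \cong \bigoplus_{\ba} R(-\ba)^{\beta_{p,\ba}(b;d)}$, where $\beta_{p,\ba}(b;d) = \dim K_{p,q}(b;d)_{\ba}$ and the contributing $\ba$ all satisfy $|\ba| = d(p+q) + b$. Since the Hilbert series is additive over short exact sequences of $\ZZ^3$-graded modules (comparing dimensions in each fixed multidegree), splicing the resolution gives
\[
\HS_{S(b;d)}(t_0,t_1,t_2) = \sum_p (-1)^p \HS_{F_p}(t_0,t_1,t_2) = \sum_{p,\ba} (-1)^p \beta_{p,\ba}(b;d)\, t^{\ba} \cdot \HS_R(t_0,t_1,t_2),
\]
because $\HS_{R(-\ba)} = t^{\ba}\,\HS_R$. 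Multiplying through by $B = 1/\HS_R$ yields exactly $A(t_0,t_1,t_2) = \sum_{p,\ba}(-1)^p \dim K_{p,q}(b;d)_{\ba}\, t^{\ba}$, as claimed; note that this sum is genuinely a polynomial (not just a formal series) because the resolution is finite and each $F_p$ is finitely generated.

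The one genuine subtlety — and the step I expect to require the most care rather than the most cleverness — is justifying that these manipulations of multigraded Hilbert series are legitimate: one must check that $S(b;d)$ and each $F_p$ have Hilbert series that are well-defined elements of $\ZZ[[t_0,t_1,t_2]][t_i^{-1}]$ (finite-dimensional in each multidegree, supported in a suitable cone), and that the alternating-sum identity holds as an identity of such series. This is routine given that all modules in sight are finitely generated $\ZZ^3$-graded modules over the polynomial ring $R$ whose generating multidegrees lie in the strictly convex cone $\{\,|\bb| = d\,\}$, so the standard theory of multigraded Hilbert series (e.g.\ via filtering by total degree $|\ba|$, where each graded piece is finite-dimensional) applies without change. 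An alternative, equivalent route — which avoids invoking a minimal resolution and instead uses the Koszul-type complex \eqref{eqn:3 term} directly — is to observe that the Euler characteristic of the Koszul complex $\bigwedge^\bullet S_d \otimes S(b;d)$ computes $\HS_{S(b;d)} \cdot \prod_{\bb}(1 - t^{\bb})$ on the nose, and then identify the cohomology of that complex with the $K_{p,q}$ spaces as in \S\ref{sec:computational approach}; either framing gives the result.
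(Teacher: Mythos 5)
Your proposal is correct and follows essentially the same route as the paper's proof: take a minimal $\ZZ^3$-graded free resolution of $S(b;d)$ over $R=\Sym(S_d)$, identify the graded Betti numbers of $F_p$ with $\dim K_{p,q}(b;d)_{\ba}$, note that $\HS_R = 1/B$, and conclude by additivity of Hilbert series. The extra remarks on well-definedness of the multigraded series and the alternative Koszul-complex Euler characteristic argument are fine but not needed beyond what the paper records.
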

\begin{proof}
Let $\bF$ be the minimal free resolution of $S(b;d)$ as a $R:=\Sym(S_d)$-module. The ring $R=\Sym S_d$ inherits a natural $
\ZZ^3$-grading from the $\ZZ^3$-grading on $S_d$, and thus we can assume that $\bF=[\dots \to F_1\to F_0]$ is $\ZZ^3$-graded. If we write $F_p:=\oplus_{\ba} R(-\ba)^{\beta_{p,\ba}}$, then we have $\beta_{p,\ba}=\dim K_{p,q}(b;d)_{\ba}$.

The $\ZZ^3$-graded Hilbert series of $R$ is $1/B(t_0,t_1,t_2)$ and thus the Hilbert series of $R(-\ba)$ is $t^\ba/B(t_0,t_1,t_2)$. The desired statement then follows from additivity of Hilbert series.
\end{proof}

If we fix $p,b,d$ and $|\ba|$, then Lemma~\ref{lem:rational function} implies that $A(t_0,t_1,t_2)$ entirely determines $K_{p,q}(b;d)_{\ba}$, unless there are multiple values of $q$ such that $K_{p,q}(b;d)_{\ba}\ne 0$. 

\begin{defn}
Given $b$ and $d$, we define the \defi{relevant range} as the set of pairs $(p,q)$ where $K_{p,q}(b;d)\ne 0$ and where either $K_{p-1,q+1}(b;d)\ne 0$ or $K_{p+1,q-1}(b;d)\ne 0$.
\end{defn}

For instance, looking at $\beta(\PP^2, 0;5)$ in Appendix 1, we see that the relevant range is the set $\{(14,1),(15,1),(13,2),(14,2)\}$. All other entries are determined by the Hilbert series. 

Since it easy to compute the Hilbert series of the modules $S(b;d)$, it will be much easier to compute Betti numbers outside of the relevant range. For $\PP^2$ the relevant range is precisely understood. See~\cite[Remark~6.5]{ein-lazarsfeld} for the $K_{p,0}$ and $K_{p,2}$ statements, and ~\cite[Theorem~2.2]{green-II} and \cite[Theorem~2.c.6]{green-I} for the $K_{p,1}$ statements.

\subsection{Computing outside the relevant range}
For values outside of the relevant range, we compute $\beta_{p,\ba}(b;d)$ using the Hilbert series. Recall that we must have $|\ba|=(p+q)d+b$ for the space to be nonzero. The following elementary algorithm computes $A(t_0,t_1,t_2)$.
\begin{algorithm} \ 

\begin{tabular}{l l}
	 Input:&$b,d$. \\
	Output: &The polynomial $A(t_0,t_1,t_2)$ for $S(b;d)$, as in Lemma~\ref{lem:rational function}\\
	&- $N:=d\left(\binom{d+2}{2}-1\right)$ and $L:=\{\ba \in \mathbb N^3 | |\ba| \equiv b (\text{mod } d) \text{ and } |\ba| \leq N\}$.\\
	& - $C(t_0,t_1,t_2) := \sum_{\ba \in L} \dim S_{\ba}t^\ba$\\
	& - Let $A(t_0,t_1,t_2)$ be the sum of all terms of degree $\leq N$ in the product of\\
	& \phantom{- }$C(t_0,t_1,t_2)$ and $B(t_0,t_1,t_2)$.
\end{tabular}
\end{algorithm}
\begin{proof}
By Lemma~\ref{lem:rational function} we have $A(t_0,t_1,t_2)=\HS_{S(b;d)}(t_0,t_1,t_2)B(t_0,t_1,t_2)$.  If we can bound the degree of $A(t_0,t_1,t_2)$, then we can bound the number of terms in the power series $\HS_{S(b;d)}$ that we will need to consider. Each $S(b;d)$ is a Cohen-Macaulay $R$-module and thus has projective dimension $\binom{d+2}{2}-3$. And since $0\leq b \leq d$, the regularity of $S(b;d)$ is at most $2$. Thus, the largest total degree of a nonzero Betti number of $S(b;d)$ is $N=d\left(\binom{d+2}{2}-1\right)$, and it follows that $\deg A \leq N$. By definition, $C(t_0,t_1,t_2)$ is the sum of all terms of degree $\leq N$ in the power series $\HS_{S(b;d)}(t_0,t_1,t_2)$.
\end{proof}

\section{Main Computation}\label{sec:main computation}
\subsection{Constructing the matrices in the relevant range}
Within the relevant range we can incorporate the $S_3$-symmetry to restrict to multidegrees $(a_0,a_1,a_2)$ where $a_0\geq a_1\geq a_2$. Moreover, we can use duality for Koszul cohomology groups to further cut down the number of matrices we need to compute~\cite[Theorem~2.c.6]{green-I}. The table in Figure~\ref{fig:matrices} lists the number of matrices needed (after accounting for duality and $S_3$-symmetries) in the computations for various values of $d$ and $b$.
\begin{figure}
\[
\begin{array}{c|c|c|c}
&&\# \text{ of Relevant} & \text{Largest} \\ 
d&b& \text{Matrices}& \text{Matrix}\\\hline
\multirow{4}{*}{4} &0& 0 & \text{N/A}\\ \cline{2-4}
&1&0 &\text{N/A}\\ \cline{2-4}
&2&56 &255\times 669\\ \cline{2-4}
&3&56 &255\times 669\\ \hline

\end{array}
\qquad
\qquad
\begin{array}{c|c|c|c}
&&\# \text{ of Relevant} & \text{Largest} \\ 
d&b& \text{Matrices}& \text{Matrix}\\\hline
\multirow{5}{*}{5}&0& 102 & 2,151 \times 3,159\\ \cline{2-4}
&1&0 &\text{N/A}\\ \cline{2-4}
&2&102 &2,151 \times 3,159\\ \cline{2-4}
&3&424 &38,654\times 95,760\\ \cline{2-4}
&4&424 &38,654\times 95,760\\ \hline
%6&0& 1,028 &596,898\times 1,246,254\\ \hline
%&1&148 &7,345 \times 9,890\\ \hline
%&2&148 &7,345 \times 9,890\\ \hline
%&3&1,028 &596,898\times 1,246,254\\ \hline
%&4& 1,753 & 4,175,947\times 12,168,528\\ \hline
%&5&1,753 &4,175,947\times 12,168,528\\ \hline
%&2&?? &??\\ \hline
%&3&?? &??\\ \hline
%&4&?? &??\\ \hline
\end{array}
\]
\caption{There are no relevant matrices for $d<4$. For $d=4,5$ and each $b$, we list the number of relevant matrices. See Figure~\ref{fig:d6} for the data when $d=6$. Some lines are identical because of duality of Koszul cohomology groups.}
\label{fig:matrices}
\end{figure}

\begin{remark}
For testing purposes, we also computed $\dim K_{p,q}(b;d)$ using our rank algorithms for many $(p,q)$ outside of the relevant range, including all $K_{p,q}(b;d)(b;d)$ for $d\leq 4$. In all cases, the computation gave the correct result.
\end{remark}

Writing out and storing each of the matrices $\partial_{p,\mathbf{a}}$ is inefficient, both in terms of runtime and memory. We streamline this process by utilizing a symmetry of the matrices $\partial_{p,\ba}$ for various $\ba$. Consider the commutative diagram
\[
\xymatrix{
 \bigwedge^p S_d\otimes S_{qd+b}\ar[rr]^-{\partial_p}\ar[d]^-{\phi_p}&& \bigwedge^{p-1} S_d\otimes S_{q(d+1)+b}\ar[d]^-{\phi_{p-1}}\\
 \bigwedge^p S_d\ar[rr]^-{d_p}&& \bigwedge^{p-1} S_d
}
\] 
where $\phi_p(m_1\wedge m_2\wedge \cdots \wedge m_p \otimes f)=m_1\wedge m_2\wedge \cdots \wedge m_p$ and similarly for $\phi_{p-1}$, and where $d_p\left(m_1\wedge m_2\wedge\cdots\wedge m_p\right)=\sum_{k=1}^p(-1)^km_1\wedge m_2\wedge\cdots\wedge\widehat{m}_k\wedge\cdots\wedge m_p$. 

We represent $\partial_p$ and $d_p$ as matrices with respect to the basis consisting of wedge/tensor powers of all monomials. For a multidegree $\ba$, we write $d_{p,\leq \ba}$ for the submatrix of $d_p$ involving basis vectors of degree $\leq \ba$. 

We claim that $\partial_{p,\ba}$ equals $d_{p,\leq \ba}$. The crucial observation is the following. For a pure tensor $m_1\wedge m_2\wedge \cdots \wedge m_p \otimes f\in \bigwedge^p S_d \otimes S_{e}$ of degree $\ba$, the monomial $f$ is entirely determined by the multidegree $\ba$ and by the monomials $m_1,m_2,\dots, m_p$. In other words, if we know the multidegree of a monomial pure tensor, then the $\otimes S_e$ factor is redundant information.

\begin{example}
Let $S=\CC[x,y,z]$ and 
consider a monomial $m_1\wedge m_2\otimes f\in (\bigwedge^2 S_3\otimes S_3)_{(7,3,2)}$.  If $m_1=x^3$ and $x_2=x^2y$ then $f$ must equal $x^2y^2z^2$.
\end{example}

We can thus compute the rank of $\partial_{p,\ba}$ by computing the rank of a submatrix of a $d_p$. So instead of constructing and storing each $\partial_{p,\ba}$, we simply precompute the matrix $d_p$ and then take slices corresponding to any particular multidegree. In practice, this seemed to significantly improve the runtime and memory on the construction of the matrices, though we did not track precise comparisons with a more naive construction of the matrices.

\begin{example}
For $d=6$, it took one hour on a standard laptop to construct the relevant matrices for all $b$. One of the more complicated entries, $K_{9,0}(3;6)$, required 178 distinct matrices which took up a total of 2 GB of space. While the bulk of these matrices are very small, some of the matrices can be massive. See Figure~\ref{fig:matrices} and Example~\ref{ex:RAM} for more details.
\end{example}

\subsection{Sparse linear algebra}\label{subsec:QR}
Computing the cohomology of \eqref{eqn:3 term multigraded} amounts to computing the ranks of many matrices. However, as seen in Figure~\ref{fig:matrices} these matrices can be quite large. While standard (dense) matrix algorithms quickly fail to terminate, the matrices turn out to be quite sparse, as the formula for $\partial_p$ given in equation~\ref{eqn:3 term} implies that each row of $\partial_{p,\mathbf{a}}$ has only $p$ distinct entries.

\begin{example}
For $K_{8,1}(0;5)$ we use $41$ matrices which range in size from $23\times 144$ to $22,349\times 24,157$. For the largest these matrices, only $0.03\%$ of the entries are nonzero.
\end{example}

We can thus use sparse algorithms for our rank computations. Specifically, we base our rank computation on a rank revealing version of LU-factorization. Like many matrix factorizations, LU-factorization seeks to write a matrix as product of two matrices that are easier to understand. In particular, if $A$ is an $m\times n$ matrix with $m\geq n$, then an exact LU-factorization writes $A$ as:
\[
QAP=LU=L\begin{pmatrix}
U_{11} & U_{12} \\ 
0 & 0
\end{pmatrix}
\]
where: 
\begin{itemize}[noitemsep]
\item $Q$ is an $m\times m$ permutation matrix,
\item $P$ is a $n\times n$ permutation matrix,
\item $L$ is a lower triangular matrix with unit diagonal,
\item $U_{11}$ is a non-singular $r\times r$ upper-triangular matrix, and
\item $U_{12}$ is a $r\times(m-r)$ matrix.
\end{itemize}
Given such a factorization the rank of $A$ equals the size of $U_{11}$. Since we use numerical computation, in practice we generally factor $A$ as:
\[
QAP=LU=L\begin{pmatrix}
U_{11} & U_{12} \\ 
0 & U_{22}
\end{pmatrix}
\]
where $P,Q,L,U_{11}, U_{12}$ are as before, and $U_{22}$ is, in a sense, insignificant relative to $U_{11}$. Specifically we want the smallest singular value of $U_{11}$ to be much bigger than the largest singular value of $U_{22}$. The number of non-zero singular values of $A$ is then approximately the number of non-zero singular values of $U_{11}$ i.e. the size of $U_{11}$. (For a more in-depth discussion of using LU factorizations see \cite[Section 3.2]{golub-matrix-computations}.)

We use the MatLab interface to the {\verb LUSOL } library~\cite{LUSOL,ML} to produce an LU-factorization of each matrix $A:= \partial_{p,\ba}$. 
The matrix $U$ is an $n\times m$ upper triangular matrix whose diagonal entries are decreasing in size, and $U_{11}$ is the sub-matrix of $U$ on the first $k$ columns and rows where $k$ is the largest number such that that $|U_{k,k}|$ is greater than some chosen tolerance. 
(Since we do not have clear data on how to appropriately chose the tolerance, we chose instead to vary the tolerance.  See Remark~\ref{rmk:tolerance} below.)  In this way we actually compute the numerical rank of $A$ with respect to this given tolerance. 
More succinctly we approximate the rank of the differential $A:= \partial_{p,\ba}$ by counting the number of diagonal entries of $U$ larger than the above tolerance. 

While this approach using sparse LU-factorization algorithms allows us to go beyond what is currently possible with dense matrix algorithms there are two down sides. First, while spares, our matrices tend to have very high rank (relative to their size). For instance, the matrix $\partial_{9,(19,19,19)}$ has size $596,898 \times 1,246,254$ and rank $596,307$. This adds to the complexity of the LU-factorization algorithm, and increases runtime and memory usage. Second, due to the threshold and the approximate nature of our factorization, our rank computations are numerical and not symbolic in nature. There is the possibility of numerical error. See \S\ref{subsec:numerical error} for a discussion of error, and how post-processing catches some small numerical errors.

\subsection{High Throughput Computations}
The rank computations can be efficiently distributed over numerous different computers. We implemented these computations using high throughput computing via HTCondor~\cite{HTCondor} on both the University of Wisconsin-Madison campus computing pool~\cite{CHTC} and on the Open Science Grid~\cite{OpenScienceGrid}. In addition, since some of those rank computations require substantial RAM, we make use of University of Wisconsin-Madison's High Throughput Computing cluster to manage our computations.

We do not a priori know the RAM and time required for individual matrix computations. We thus start by submitting jobs with LUSOL's default memory allocation. For the jobs that fail, we increase the memory allocation and resubmit. We iterate this process until the computation terminates. This approach works well with our data, as the bulk of of
the matrices terminate with very little RAM. However, for some computations, the memory
required exceeds 250 GB of RAM, and our hardware grid has a small number of nodes
with this much RAM available. These largest computations can take days to complete

\begin{example}\label{ex:RAM}
One of our larger computations was for the Betti number $K_{9,0}(3;6)$. After accounting for symmetries, the computation involved $178$ distinct matrices, the largest of which was $596,898\times 1,246,254$. For the matrices, the RAM and time used were:
\begin{itemize}[noitemsep]
	\item 80\% used $<1$ GB RAM, taking $<1$ minute on average, with a max of 18 minutes.
	\item 9\% used 1-10 GB RAM, taking 13 minutes on average, with a max of 40 minutes.
	\item 10\% 10-100 GB RAM, taking 5 hours on average, with a max of 15 hours.
	\item The remaining two matrices each used $450$ GB RAM. One took 27 hours and the other took 49 hours. 
\end{itemize}
\end{example}

\section{Post-processing the Data}\label{sec:post processing}
\subsection{Betti numbers and Schur module decompositions}
Finally, we assemble and post-process the data.  Obtaining the multigraded Betti numbers and the total Betti numbers is simple. For the multigraded and total Betti numbers, we have
\[
\beta_{p,\ba}(b;d) = \rank(\ker \partial_{p,\mathbf{a}}) - \rank(\partial_{p+1,\mathbf{a}}) \quad \text{ and } \quad
\beta_{p,p+q}(b;d) = \sum_{\substack{\ba\in \ZZ^{n+1}\\ |\ba|=(p+q)d+b}} \beta_{p,\ba}(b;d)
\]
respectively.
%
%$
%For the total Betti number $\beta_{p,p+q}(b;d)$, we have
%$
%\beta_{p,p+q}(b;d) = \sum_{\substack{\ba\in \ZZ^{n+1}\\ |\ba|=(p+q)d+b}} \beta_{p,\ba}(b;d).
%$
We determine the Schur module decomposition via the highest weight greedy algorithm below. For a polynomial $P$ we write $\lex(P)$ for the lex-leading monomial of $P$. 
\begin{algorithm}[Schur Module Decomposition]\label{alg:post process schur} \ 

\begin{tabular}{l l}
Input: &$\beta_{p,\ba}(b;d)$ for fixed $b,d,p$ and all $\ba$ with $|\ba|=(p+q)d+b$.\\
Output: &A list $K$ of the partitions appearing in the Schur module decomposition of \\
&$K_{p,q}(b;d)$, with multiplicity.\\
&- $L:=\{ \ba | |\ba|=(p+q)d+b\}$ and $H=\sum_{\ba\in L} \beta_{p,\ba}(b;d)\cdot t^\ba$.\\
&- $K=\{\}$.\\
& - While the coefficient of $\lex(H)>0$ do:\\
& \qquad - Let $\lambda=(\lambda_1,\lambda_2,\lambda_3)$ be the weight of the lex-leading monomial in $H$.\\
& \qquad - Let $K=K\cup \{\lambda\}$.\\
&\qquad - Let $H$ equal $H$ minus the multigraded Hilbert series of the Schur module $\bS_\lambda(\CC^3)$.\\
&- Return $K$.
\end{tabular}
\end{algorithm}

\subsection{Numerical Error}\label{subsec:numerical error}
With the exception of the computation of the multigraded Betti numbers all other steps in our computation are symbolic. However, as we use numerical methods for the rank computations there is a chance for errors to appear in the multigraded Betti numbers.

We can sometimes detect and correct numerical errors when we apply Algorithm~\ref{alg:post process schur}. Since the $K_{p,q}(b;d)$ spaces are $\GL_3$-representations, each space must decompose as a direct sum of a much smaller number of Schur modules. Since the numerical errors tend to arise in the larger matrices, which tend to involve the most balanced weights, the greedy algorithm outlined in Algorithm~\ref{alg:post process schur} still sometimes suggests a ``best fit Schur decomposition'' for our multigraded data.

For all cases with $d\leq 5$, there appear to have been no numerical errors.\footnote{Interestingly, earlier computations where we used a QR-decomposition algorithm seem to have produced minor numerical errors in a small number of multigraded Betti numbers for $d=5$.} With $d=6$: there appear to be no numerical errors for $b=1,2$; there do appear to be some numerical errors for $b=0,3$ and we are continuing to process those results; and we are still awaiting the complete results for $b=4,5$, but we expect to find numerical errors in those cases as well. We believe that finding a more robust ``best fit Schur decompose'' algorithm will be crucial for extending our computation beyond $d=6$.

\begin{remark}\label{rmk:tolerance}
In a different direction, we also vary the tolerance in our computation as a way of understanding these numerical errors.  For many of the rank computations, we actually perform an array of computations with various different values for the tolerance.  This enables us to look over the data to see if a rank value is stable with respect to an array of tolerance values, as that would increase our confidence in the result.  Moreover, we hope that this data will provide a foundation for predicting appropriate tolerance values, and thus improving the algorithm in the future.  At the moment, this remains largely speculative.
\end{remark}

\section{Conjectures}\label{sec:conjectures}
In this section, we summarize several conjectures and observations derived by combining our data with other known results. 

\subsection{Dominant Schur Modules}\label{subsec:dominant}
The most efficient way to encode the structure of the Betti tables of Veroneses is via the Schur functor description, as this encapsulates and encodes the essential symmetries of these Betti tables.  We thus begin by focusing on Question~\ref{q:schur}.  When analyzing a representation of $\GL_n$, the first layer is, in a sense, given by the dominant weight representations that appear in the decomposition.  Our data led us to a conjecture about these dominant weight representations.  This can be viewed as a first approximation of an answer to Question~\ref{q:schur}.  Moreover, our proposed answer sharpens Ein and Lazarsfeld's Vanishing Conjecture for Veronese syzygies~\cite[Conjecture 7.5]{ein-lazarsfeld}, and it suggests a strong uniformity among the syzygies arising in each row of the Betti table.

In \S\ref{sec:background} above we reviewed the monomial syzygy construction from~\cite{eel-quick}. While the monomial syzygies $\mathrm E_{p,q}(\PP^n, b;d)$ represent only a small fraction of the total syzygies, they are conjecturally sufficient to give sharp vanishing/nonvanishing bounds~\cite[Remark~2.8]{eel-quick}. In other words, Ein and Lazarsfeld's conjecture on vanishing says that
\[
\mathrm E_{p,q}(\PP^n, b;d) \ne 0 \iff K_{p,q}(\PP^n,b;d) \ne 0.
\]
We conjecture that these monomial syzygies not only control the (non)vanishing of the $K_{p,q}(b;d)$, but they also determine the most dominant Schur module weights.
\begin{conj}\label{conj:domweights}
For all $n,d,b,p$ and $q$, we have:
\[
\dw \mathrm E_{p,q}(\PP^n, b;d) = \dw K_{p,q}(\PP^n, b;d).
\]
\end{conj}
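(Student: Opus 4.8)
The plan is to prove the two inclusions of Conjecture~\ref{conj:domweights} separately, since one of them is formal and all of the content lies in the other. First I would record the easy inclusion $\dw \mathrm{E}_{p,q}(\PP^n,b;d) \subseteq \dw K_{p,q}(\PP^n,b;d)$. Recall from \S\ref{sec:background} that the Artinian reduction identifies $\Tor^{R}_{p}(S(b;d),\CC)$ with the analogous $\Tor$ over $\Sym(\overline{S_d})$, and that each monomial cycle $m_1\wedge\cdots\wedge m_p\otimes f$ is a $(\CC^*)^{n+1}$-eigenvector; consequently $\mathrm{E}_{p,q}(\PP^n,b;d)$ is, by construction, a $\ZZ^{n+1}$-graded subspace of $K_{p,q}(\PP^n,b;d)$, so its set of torus weights, and in particular its dominant ones, is a subset of those of $K_{p,q}(\PP^n,b;d)$. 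The whole problem is therefore the reverse inclusion: every dominant weight that occurs in $K_{p,q}(\PP^n,b;d)$ is already realized by one of the monomial syzygies of \cite{eel-quick}.

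Next I would exploit the $\GL_{n+1}(\CC)$-structure to reorganize this reverse inclusion. In characteristic $0$ the space $K_{p,q}(\PP^n,b;d)$ is a semisimple $\GL_{n+1}(\CC)$-representation, and the dominant weights of $\bS_\mu(\CC^{n+1})$ are exactly the partitions $\preceq \mu$ in dominance order; hence $\dw K_{p,q}(\PP^n,b;d)$ is the dominance-down-closure of the set of $\mu$ with $\bS_\mu \subseteq K_{p,q}(\PP^n,b;d)$, and in particular it is down-closed in the poset of partitions of $d(p+q)+b$. The reverse inclusion would then follow from two statements: (i) $\dw \mathrm{E}_{p,q}(\PP^n,b;d)$ is also down-closed --- a ``weight-shifting'' lemma internal to the construction of \cite{eel-quick}, to the effect that whenever $m_1\wedge\cdots\wedge m_p\otimes f$ is an admissible monomial cycle and $\lambda\preceq$ its weight with $|\lambda|=d(p+q)+b$, there is an admissible monomial cycle of weight exactly $\lambda$ (intuitively, when $f$ is the lex-leading monomial annihilated by the $m_i$, the annihilator of $f$ in $\overline{S}$ is large enough to absorb weight redistributed out of $f$ and across the $m_i$); and (ii) every \emph{maximal} element $\mu$ of $\dw K_{p,q}(\PP^n,b;d)$ --- equivalently, every $\bS_\mu$ occurring in $K_{p,q}(\PP^n,b;d)$ with $\mu$ maximal --- is the weight of some admissible monomial cycle.

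Statement (ii) is the crux. I would approach it by first pinning down $\dw \mathrm{E}_{p,q}(\PP^n,b;d)$ explicitly: the lex-leading-monomial recipe of \cite{eel-quick} should translate, after the Artinian reduction, into a description of the realizable dominant weights as the lattice points of an explicit polytope (or finite union of such) depending only on $n,d,b,p,q$. One then needs a Schur-refined vanishing theorem asserting that no partition outside that polytope can occur as a weight of $K_{p,q}(\PP^n,b;d)$. The boundary rows are within reach: for $q=0$ one uses the $N_p$-type arguments of \cite{green-I,ein-lazarsfeld-np}, for $q=1$ the descriptions in \cite{green-I,green-II}, and the top nonzero row via Koszul/Serre duality reduces to $q=0$. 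The interior rows are genuinely open, and even granting the Ein--Lazarsfeld vanishing conjecture \cite[Conjecture~7.5]{ein-lazarsfeld} --- which only controls total (non)vanishing --- one still must upgrade it to the dominant-weight level, i.e. prove the monomial construction is sharp in each multidegree rather than merely in aggregate.

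For that upgrade the most promising tools seem to be: (a) a degeneration/upper-semicontinuity argument comparing $K_{p,q}(\PP^n,b;d)$ over $\Sym(S_d)$ with a monomial-friendly degeneration whose syzygies are precisely the classes in $\mathrm{E}_{p,q}(\PP^n,b;d)$, so that any extra dominant weight would be forced to specialize away; or (b) a direct analysis of the three-term complex~\eqref{eqn:3 term} restricted to the $\lambda$-isotypic component, showing a highest-weight cycle of maximal weight $\mu$ can be replaced modulo boundaries by a monomial cycle --- essentially a $\GL_{n+1}(\CC)$-equivariant standard-monomial argument on the Koszul homology of the Artinian reduction. The main obstacle, in short, is exactly this interior sharpness: it simultaneously subsumes the open Ein--Lazarsfeld vanishing conjecture and strengthens it to an equivariant statement, and at present the evidence is the agreement of the two sides throughout the computed range, as recorded in \S\ref{subsec:dominant}.
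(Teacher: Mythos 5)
There is a fundamental mismatch here: the statement you are addressing is a \emph{conjecture} in the paper, not a theorem. The paper offers no proof of Conjecture~\ref{conj:domweights}; its support is purely experimental (the computations for $\PP^2$ with $d\leq 6$ and partial checks at $d=7$ described in \S\ref{subsec:dominant}), and the authors explicitly note that the statement would imply Ein--Lazarsfeld's open vanishing conjecture \cite[Conjecture~7.5]{ein-lazarsfeld}. Your proposal is therefore not something that can be matched against a paper proof, and on its own terms it is not a proof either: you yourself concede that the ``interior sharpness'' step --- showing every maximal weight of $K_{p,q}(\PP^n,b;d)$ is realized by a monomial cycle --- is open and strictly stronger than the Ein--Lazarsfeld conjecture. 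The proposed tools (a monomial degeneration with semicontinuity, or an equivariant standard-monomial analysis of the complex~\eqref{eqn:3 term}) are reasonable directions, but nothing in the proposal substantiates them, so the argument stops exactly where the content of the conjecture begins.

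One smaller but genuine issue: even your ``easy inclusion'' $\dw \mathrm E_{p,q}(\PP^n,b;d)\subseteq \dw K_{p,q}(\PP^n,b;d)$ is not formal under the paper's meaning of $\dw$. In the paper (see the examples and Appendix~2, where only the maximally dominant modules are highlighted), $\dw W$ denotes the \emph{maximal} weights of $W$ under the dominance order, not the set of all dominant weights occurring. Granting that $\mathrm E_{p,q}$ sits inside $K_{p,q}$ as a $\ZZ^{n+1}$-graded subspace only gives that every weight of $\mathrm E_{p,q}$ occurs in $K_{p,q}$; a maximal weight of $\mathrm E_{p,q}$ could a priori be strictly dominated by some other weight of $K_{p,q}$, so the containment of maximal-weight sets already requires the same sharpness information as the hard direction. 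Your down-closure reformulation via the weights of $\bS_\mu(\CC^{n+1})$ is fine as representation theory, but the weight-shifting lemma (i) you posit for the monomial construction of \cite{eel-quick} is asserted, not proved, and would itself need an argument. In short: the statement remains a conjecture, and the proposal is a research plan with the decisive steps missing.
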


We underscore the counterintuitive nature of the conjecture.
We see no obvious reason why monomial syzygies should determine the vanishing/nonvanishing question, let alone why they would provide a full description of the dominant weights. But Conjecture~\ref{conj:domweights}, which was discovered primarily through our experimental data, suggests that these simple monomial syzygies are deeply connected to the Schur module structure of the $K_{p,q}$ spaces.

\begin{example}
The space $K_{2,1}(0;4)$ is the direct sum of $9$ distinct Schur modules, each with multiplicity one. There are two dominant weight Schur modules: $\bS_{(9,2,1)}$ and $\bS_{(8,4,0)}$. These are naturally in bijection with the two dominant weight monomial syzygies from $\rE_{2,1}(0;4)$: $x_0^3x_1\wedge x_0^3x_2 \otimes x_0^3x_1$ and $x_0^3x_1\wedge x_0^2x_1^2 \otimes x_0^3x_1$.
\end{example}

The conjecture also suggests a mysterious uniformity among all of the $K_{p,q}(\PP^n,b;d)$ lying in a single row of a Betti table. Namely, if we vary only $p$, then the monomial syzygies constructed in~\cite{eel-quick} naturally form a graded lattice, with a unique maximal and minimal element. In other words, is is natural to think of the entire $q$th row as a single object
\[
K_{\bullet,q}(\PP^n,b;d) :=\bigoplus_{p} K_{p,q}(\PP^n,b;d)
\]
and to ask whether this vector space is a representation (or even an irreducible representation) over a larger group. Precisely such a phenomenon occurs when $d=2$ by~\cite{sam-super}.

\begin{example}\label{ex:lattice}
Consider $K_{\bullet,1}(\PP^2,0;3)$, which corresponds to the first row of the Betti table in Example~\ref{ex:K11 d3}. The most dominant weights of $K_{p,1}(\PP^2,0;3)$ are in bijection with the weights of the monomial syzygies in the $p$th row of the following lattice:
\[
\begin{tikzpicture}[yscale=1.1]
\draw(0,5) node{$x^2y\otimes x^2y$};
\draw(-2,4) node{$x^2y\wedge xy^2\otimes x^2y$};
\draw(2,4) node{$x^2y\wedge x^2z\otimes x^2y$};
\draw(0,3) node{$x^2y\wedge xy^2\wedge x^2z\otimes x^2y$};
\draw(0,2) node{$x^2y\wedge xy^2\wedge x^2z\wedge xyz\otimes x^2y$};
\draw(-4,1) node{$x^2y\wedge xy^2\wedge x^2z\wedge xyz\wedge y^2z \otimes x^2y$};
\draw(3.3,1) node{$x^2y\wedge xy^2\wedge x^2z\wedge xyz\wedge xz^2\otimes x^2y$};
\draw(0,0) node{$x^2y\wedge xy^2\wedge x^2z\wedge xyz\wedge y^2z \wedge xz^2\otimes x^2y$};
\draw[-,thick] (0,4.7)--(-2,4.3);
\draw[-,thick] (0,4.7)--(2,4.3);
\draw[-,thick] (0,3.3)--(-2,3.7);
\draw[-,thick] (0,3.3)--(2,3.7);
\draw[-,thick] (0,2.3)--(0,2.7);
\draw[-,thick] (0,1.7)--(-2,1.3);
\draw[-,thick] (0,1.7)--(2,1.3);
\draw[-,thick] (0,0.3)--(-2,0.7);
\draw[-,thick] (0,0.3)--(2,0.7);
\end{tikzpicture}
\]
\end{example}

We have confirmed Conjecture~\ref{conj:domweights} even in some cases where a full computation of the $K_{p,q}$ is infeasible. Our rank computations of $\partial_{p,\ba}$ take the longest when $\ba$ is highly balanced, e.g. $\ba = (12,11,10)$. By contrast, Conjecture~\ref{conj:domweights} addresses the most dominant -- and thus most unbalanced -- weights. 
The parallel nature of our computational techniques thus enabled us to verify Conjecture~\ref{conj:domweights} in some cases when $d=7$.

In all of the examples we have computed, the multiplicities of the dominant weight Schur modules is always one. It would be interesting to know whether this always holds.
\begin{question}\label{q:mult one}
Let $\lambda \in \dw K_{p,q}(\PP^n, b;d)$. Does the representation $\bS_\lambda(\CC^{n+1})$ appear in $K_{p,q}(\PP^n, b;d)$ with multiplicity one?
\end{question}
Related to Conjecture~\ref{conj:domweights} and Question~\ref{q:schur}, we propose the following vague question:
\begin{question}
Find a compelling combinatorial description of the set $\dw K_{p,q}(\PP^n, b;d)$.
\end{question}

A closely related conjecture based on the data is that the last nonzero $K_{p,1}(0;d)$ space is a particular irreducible Schur module.
\begin{conj}\label{conj:specific betti}
Let $p:= d \cdot \binom{d+1}{2}$, and let
\[
(a,b,c) = \bigg(\tbinom{d+2}{3}-1\ ,\  \tfrac{1}{6} d(d^2+5)\ ,\ \tbinom{d+1}{3}-1\bigg).
\]
We have
$
K_{p,p+1}(\PP^2,0;d)\cong \bS_{(a,b,c)}.
$
\end{conj}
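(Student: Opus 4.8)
The plan is to identify the last nonzero $K_{p,1}(\PP^2,0;d)$ space by combining the combinatorial structure of the Eagon--Northcott-type duality for Koszul cohomology with the monomial syzygy construction reviewed in \S\ref{sec:background}. First I would pin down the index $p := d\cdot\binom{d+1}{2}$: one checks that this is the largest $p$ for which $K_{p,1}(\PP^2,0;d)\ne 0$, using the known description of the relevant range of $\beta(\PP^2,0;d)$ (see \cite[Theorem~2.2]{green-II}, \cite[Theorem~2.c.6]{green-I}, and \cite[Remark~6.5]{ein-lazarsfeld}). At this extreme value of $p$, the complex \eqref{eqn:3 term} computing $K_{p,1}(0;d)$ has a trivial term on one side -- specifically $\bigwedge^{p-1}S_d\otimes S_{2d}$ may still be nonzero, but $\bigwedge^{p+1}S_d\otimes S_0 = \bigwedge^{p+1}S_d$ with $p+1 > \dim S_d$ forces $K_{p,1}$ to be a kernel rather than a subquotient, which makes it amenable to direct analysis. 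Equivalently, by Green's duality for Koszul cohomology one translates $K_{p,1}(\PP^2,0;d)$ into $K_{p',q'}$ of a related module where $p'$ is small, and there the space is computed by the right end of the Koszul complex.

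The key computational step is then to show that this kernel is exactly one copy of $\bS_{(a,b,c)}(\CC^3)$. I would do this in two halves. For the lower bound $\bS_{(a,b,c)}\subseteq K_{p,p+1}(\PP^2,0;d)$: the Artinian reduction argument of \cite{eel-quick} (reviewed in \S\ref{sec:background}) lets one replace $S^{(d)}$ by $\overline{S}(0;d) = (S/(x_0^d,x_1^d,x_2^d))^{(d)}$, and one exhibits an explicit nonzero monomial syzygy of the extremal multidegree. Here $\overline{S_d}$ has dimension $\binom{d+2}{2}-3$, and the top exterior power condition forces $f$ to be, up to symmetry, the product of all monomials of degree $d$ not divisible by any $x_i^d$; tracking the total multidegree of $m_1\wedge\cdots\wedge m_p\otimes f$ gives a weight whose sorted form is $(a,b,c)$ -- this is where the arithmetic identities $a=\binom{d+2}{3}-1$, $b=\tfrac16 d(d^2+5)$, $c=\binom{d+1}{3}-1$ come from (note $a+b+c = d(p+1)$, as it must since $|\ba|=d(p+q)+b$ with $q=1$, $b=0$). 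For the upper bound, that nothing else appears, one uses the Schur--Weyl/character side: compute the Euler characteristic of \eqref{eqn:3 term} at this $p$ as a virtual $\GL_3$-representation via Lemma~\ref{lem:rational function} (the numerator $A(t_0,t_1,t_2)$ of the multigraded Hilbert series), and check that all neighboring $K_{p\mp1,\,1\pm1}$ contributions either vanish or cancel, leaving exactly the character of $\bS_{(a,b,c)}$.

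The hard part will be the upper bound -- proving that $K_{p,p+1}(\PP^2,0;d)$ is \emph{irreducible}, i.e.\ that no smaller Schur modules sneak in. The monomial construction of \cite{eel-quick} only produces syzygies and controls dominant weights (conjecturally, per Conjecture~\ref{conj:domweights}), so it gives a lower bound but not obviously an exact answer; ruling out extra summands requires either an exact (not just leading-term) evaluation of the equivariant Euler characteristic, or a direct argument that the differential $\partial_p$ in \eqref{eqn:3 term} has full expected rank at every multidegree $\ba$ with $|\ba|=d(p+1)$ other than the extremal one. One promising route is to observe that at this extreme $p$ the module $K_{p,p+1}(0;d)$ is, by Koszul duality, the \emph{first} syzygy (linear strand) of an Artinian Gorenstein-type quotient, which is governed by a single determinantal/Pfaffian-type relation, forcing irreducibility; making this precise, and handling the combinatorics of the monomials of degree $d$ modulo $(x_0^d,x_1^d,x_2^d)$ cleanly, is the principal obstacle. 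A fallback is to prove the weaker statement $\dw K_{p,p+1}(\PP^2,0;d) = \{(a,b,c)\}$ (which follows from Conjecture~\ref{conj:domweights} together with an analysis of $\rE_{p,1}$ at the extreme index) and then invoke Question~\ref{q:mult one} or a dimension count to upgrade to the full isomorphism.
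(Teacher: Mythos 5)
First, a point of calibration: the paper does not prove this statement. It is offered purely as a conjecture, formulated as a Schur-functor analogue of the conjecture in \cite[\S 8.3]{wcdl} and supported by the computed Schur decompositions for small $d$ (e.g.\ $K_{15,1}(5;0)\cong\bS_{(34,25,21)}$ in Appendix~2). So there is no argument of the paper for your sketch to be compared with; the only question is whether your outline would close the gap, and it does not.

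The decisive failure is the proposed ``upper bound.'' You want to compute the extremal $K_{p,1}(\PP^2,0;d)$ from the equivariant Euler characteristic of \eqref{eqn:3 term}, i.e.\ from the numerator $A(t_0,t_1,t_2)$ of Lemma~\ref{lem:rational function}, ``checking that all neighboring contributions vanish or cancel.'' They do not: the extremal pair $(p,1)$ lies squarely in the relevant range of \S\ref{subsec:relevant range}, where $K_{p,1}$ and $K_{p-1,2}$ occupy the same multidegrees and only their difference is visible in the Hilbert series; for $d=5$ the relevant range is $\{(14,1),(15,1),(13,2),(14,2)\}$ and $K_{14,2}(\PP^2,0;5)\neq 0$, which is exactly why these entries required rank computations in the first place. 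Your structural simplification is also based on a miscount: the last nonzero $K_{p,1}(\PP^2,0;d)$ occurs at $p=\binom{d+1}{2}$ (e.g.\ $p=15$ for $d=5$, $p=10$ for $d=4$, visible in Appendix~1), which is far below $\dim S_d=\binom{d+2}{2}$, so $\bigwedge^{p+1}S_d\otimes S_0\neq 0$ and $K_{p,1}$ is a genuine subquotient, not a kernel; if instead you take the displayed $p=d\binom{d+1}{2}$ literally, then $\bigwedge^{p}S_d=0$ and the space is trivially zero. Relatedly, the consistency check you assert in passing, $a+b+c=d(p+1)$, fails for the displayed formulas (for $d=5$ they give total weight $78$, while the computed extremal module $\bS_{(34,25,21)}$ has weight $80$), so the arithmetic identification of the extremal weight was never actually carried out. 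Finally, your lower bound and your fallback both lean on Conjecture~\ref{conj:domweights} and Question~\ref{q:mult one}, which are open conjectures of this same paper, so that route is circular. What remains after these corrections is precisely the open content of the conjecture: proving that at the extremal index the homology of \eqref{eqn:3 term} is a single irreducible $\GL_3$-representation of the predicted weight.
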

This provides a Schur functor analogue of the corresponding conjecture from~\cite[\S8.3]{wcdl}. Note that the specific value of $p$ is the maximum value where $K_{p,p+1}(0;d)\ne 0$.

\subsection{Normal Distribution}\label{subsec:normal dist}
\begin{figure}
\includegraphics[scale=.35]{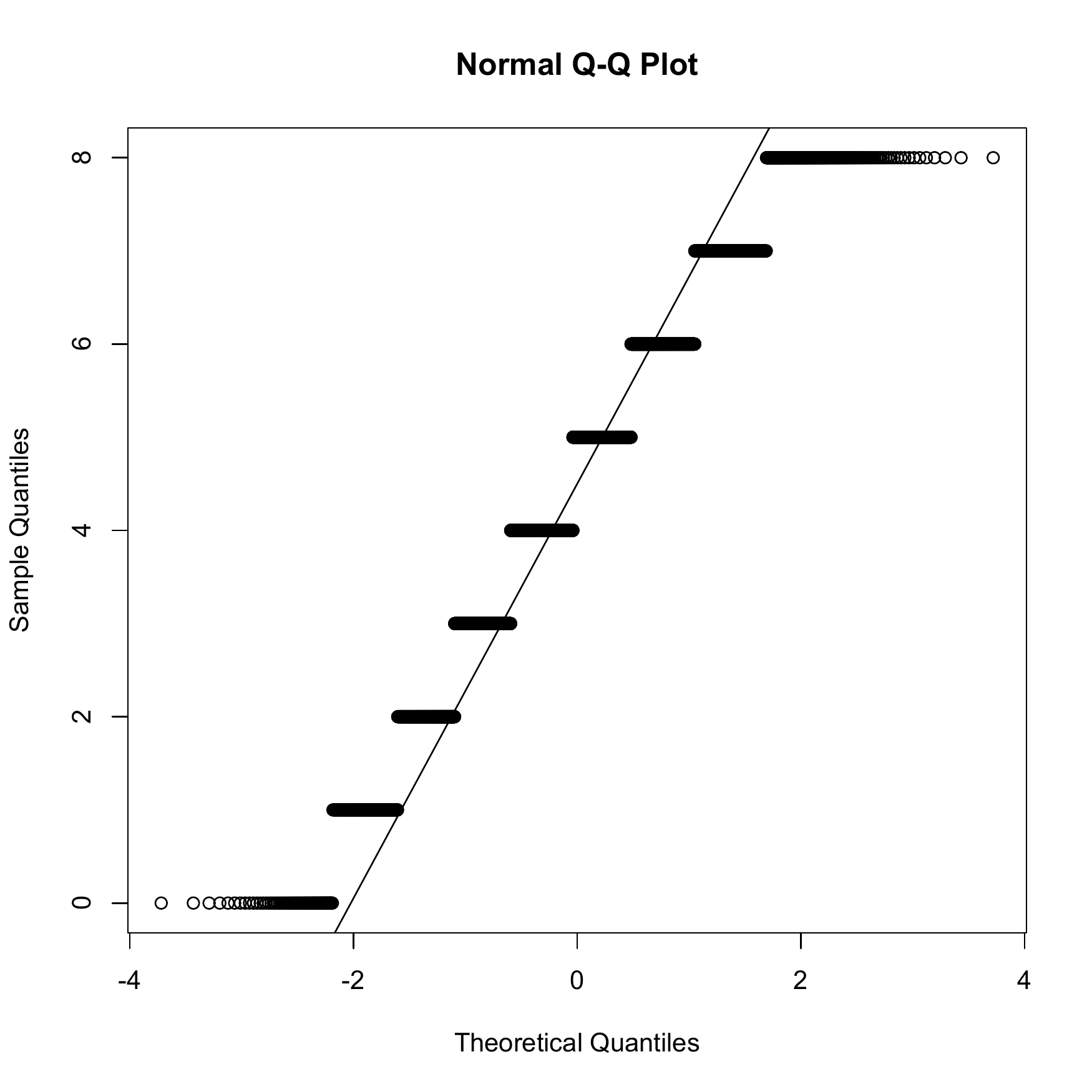}
\qquad
\includegraphics[scale=.37]{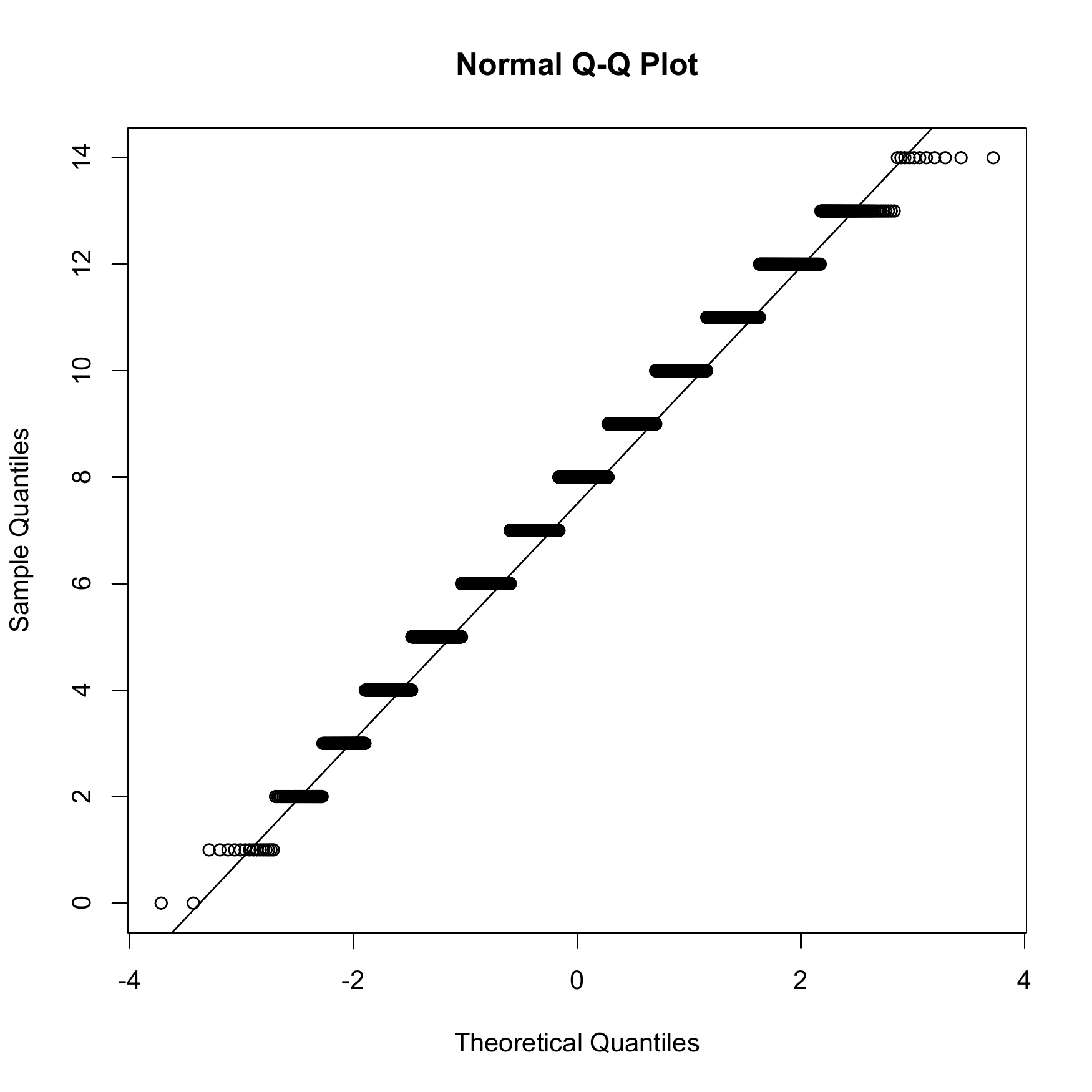}
\caption{These Q-Q plots are based on the Betti numbers in the first row of the Betti diagram of $\cO_{\PP^2}$ with respect to the $d=5$ (on the left) and $d=6$ embeddings. They suggest a normal distribution, as in~\cite[Conjecture~B]{eel-betti}.}
\label{fig:QQ}
\end{figure}

In \cite{eel-betti} the authors show that a ``randomly'' chosen Betti table converges (up to some rescaling function) to a binomial distribution, which then in turn converges to a normal distribution via the law of large numbers. This led to the conjecture that for Veronese embeddings, a plot of Betti numbers in any row of the Betti should converge, after rescaling, to a normal distribution.

In order to test this conjecture it is useful to define what we call the \defi{Betti distribution for $(q,b,d)$}. Fixing $b,d,$ and $q$, we consider the function $p\mapsto C \dim \cdot K_{p+c,q}(b;d)$ where $C\in \QQ$ is the appropriate constant so that this is a discrete probability distribution and $c\in\ZZ$ is such that the first non-zero value occurs when $p=0$. We then can compare these distributions to others in hopes of shedding light on the normality conjecture. 

One way to compare our data to a normal distribution is by creating a quantile-quantile (or Q-Q) plot. Specifically, having fixed $q,b,$ and $d$ we consider the Q-Q plot comparing the Betti distribution for $(q,b,d)$ to the normal distribution of best fit. If these two distributions were approximately the same we would expect the points in the Q-Q plot to be roughly distributed along the line $y=x$. Our resulting plots provide mild, but limited, evidence for the conjecture. 

Examining these Q-Q plots in further detail it seems that noise in the tails of the rows often muddies plots. In light of this -- and as the tails of the row are unlikely to effect any form of convergence to a normal distribution -- we also performed the above procedure after truncating the first few and last entries of each row. 
These plots appear in Figure~\ref{fig:QQ} and provide the first computational evidence for ~\cite[Conjecture~B]{eel-betti} for any variety of dimension $>1$. These graphics not only support~\cite[Conjecture~B]{eel-betti}, but they suggest that the normally distributed might kick even for modest values of $d$. 

\subsection{Boij-S\"oderberg coefficients}\label{subsec:BS coeffs}
Boij-S\"oderberg theory shows that the Betti table of any graded module can be decomposed as a positive rational sum of certain building blocks known as pure diagrams. The first proof of the main result appears in~\cite[Theorem~0.2]{eisenbud-schreyer-JAMS}, and \cite{floystad-expository} provides an expository treatment of the theory, including definitions of the relevant terms. As a consequence of Boij-S\"oderberg theory, we can study the rational coefficients that arise in this decomposition.

To get coefficients that are well-defined, we need to choose a basis for the pure diagrams $\pi_{\bd}$. Given a degree sequence $\bd=(d_0,d_1,\dots,d_r)$ we set $\pi_{\bd}$ as the Betti table (with rational entries) where
\[
\beta_{i,j}(\pi_{\bd}):= \begin{cases}
\prod_{i\ne j} \frac{1}{|d_i-d_j|} & \text{ if $j=d_i$}\\
0 & \text{ if $j\ne d_i$}.
\end{cases}
\]
For instance
\[
\pi_{(0,2,3)} = \begin{pmatrix}
\frac{1}{6} & - & -\\
-&\frac{1}{2}& \frac{1}{3}
\end{pmatrix}
\]

Then for any graded module $M$ over a polynomial ring, there exists a chain of degree sequences $C_M$ such that we can uniquely write
\[
\beta(M) = \sum_{\bd\in C_M} a_\bd\pi_\bd \text{ with } a_\bd\in \QQ_{>0}.
\]
See for instance~\cite[Theorem~5.1]{floystad-expository} for a discussion of these decompositions and their uniqueness properties.  We define the \defi{Boij-S\"oderberg coefficents of $M$} as the sequence $(a_\bd)_{\bd\in C_M}$.
\begin{example}
Let $S=\QQ[x,y,z]$ and $I=\langle x^2,xy,y^4\rangle$. Then we have the decomposition
\[
\beta(S/I) = \begin{pmatrix}1&-&-\\-&2&1\\-&-&-\\-&1&1 \end{pmatrix} =
3 \cdot \begin{pmatrix}\frac{1}{6}&-&-\\-&\frac{1}{2}&\frac{1}{3}\\-&-&-\\-&-&- \end{pmatrix} 
+3 \cdot \begin{pmatrix}\frac{1}{10}&-&-\\-&\frac{1}{6}&-\\-&-&-\\-&-&\frac{1}{15} \end{pmatrix} 
+4 \cdot \begin{pmatrix}\frac{1}{20}&-&-\\-&-&-\\-&-&-\\-&\frac{1}{4}&\frac{1}{5} \end{pmatrix} 
\]
and thus the Boij-S\"oderberg coefficients of $S/I$ are $(3,3,4)$.
\end{example}
In \cite[\S3]{eel-betti}, the Boij-S\"oderberg coefficients of Veronese varieties are shown to be closely connected to the conjectural ``normal distribution'' property discussed in the previous section, and thus Question~\ref{q:normal distribution} naturally raises the following question:
\begin{question}\label{q:BS coeffs}
For fixed $b$ and $d\to \infty$, how do the Boij-S\"oderberg coefficients behave? 
\end{question}

The limited data we have gathered suggests that the Boij-S\"oderberg coefficients are unlikely to be evenly or sporadically distributed; see Example~\ref{ex:BS coeffs} and Figure~\ref{fig:BS coeffs}. In fact, we conjecture:

\begin{conj}
For any $b,d$, the Boij-S\"oderberg coefficients of $\beta(\PP^2, b;d)$ are unimodal.
\end{conj}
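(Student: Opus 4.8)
The plan is to reduce the conjecture to an explicit assertion about a single finite sequence of positive rationals, and then to prove that this sequence is log‑concave — which already gives unimodality, since all Boij--S\"oderberg coefficients on the chain $C_{S(b;d)}$ are strictly positive, so the sequence has no internal zeros. First I would pin down the shape of $\beta(\PP^2,b;d)$: because $S(b;d)$ is Cohen--Macaulay of codimension $\binom{d+2}{2}-3$, is generated in degree $0$, and has $\reg\le 2$ (both facts used already in \S\ref{sec:precomputation}), its Betti table is supported in rows $q=0,1,2$ and columns $p=0,\dots,r$ with $r=\binom{d+2}{2}-3$, and the vanishing results recalled in \S\ref{subsec:relevant range} confine the nonzero entries of rows $q=0$ and $q=2$ to neighborhoods of the two ends of the resolution. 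Hence every degree sequence $\bd=(d_0<\cdots<d_r)$ that can occur has $d_i-i$ nondecreasing with values in $\{0,1,2\}$, so it is pinned down by its two jump positions; since a Boij--S\"oderberg decomposition is always supported on a chain, $C_{S(b;d)}$ is a chain indexed by a single integer parameter $k$ running over an interval of length $O(d^2)$.

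\textbf{Extracting the coefficients.} Running the decomposition algorithm of Eisenbud--Schreyer \cite{eisenbud-schreyer-JAMS,floystad-expository}: at each stage one subtracts the largest multiple of the pure diagram $\pi_{\bd}$ read off from the top nonzero entry in each column of the current table, and $a_{\bd^{(k)}}$ is the ratio of the relevant Betti number of $S(b;d)$ to that of $\pi_{\bd^{(k)}}$ at the entry that first drops to zero. Since the Betti numbers of $\pi_{\bd}$ are given by the Herzog--K\"uhl product formula built into its definition, this is a completely explicit procedure computing $(a_{\bd^{(k)}})_k$ from the numbers $\beta_{p,p+q}(\PP^2,b;d)$, one coefficient at a time. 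The conjecture is then precisely the statement that, on this Veronese input, the output is a unimodal sequence in $k$.

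\textbf{Proving unimodality.} I would attack the resulting sequence on two fronts. \emph{Asymptotically in $d$}: combine the recursion above with the analysis in \cite[\S3]{eel-betti}, which ties the Boij--S\"oderberg coefficients of Veroneses to the conjecturally Gaussian row profiles of Figure~\ref{fig:normal distribution}; since Gaussian (equivalently binomial) weights are log‑concave, sufficiently quantitative control of those row profiles would give log‑concavity of $(a_{\bd^{(k)}})_k$ for all $d\gg 0$, leaving only finitely many $d$ — covered by the computed data for $d\le 6$ and the partial data for $d=7,8$ — to check by hand. \emph{Structurally}: seek a lattice‑path model for the $a_{\bd^{(k)}}$, for instance by applying the Lindstr\"om--Gessel--Viennot lemma to the (essentially triangular) system that the greedy procedure solves, from which log‑concavity would follow by a path‑swapping injection; the product structure of the pure diagrams makes such a model plausible.

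\textbf{The main obstacle.} The genuine difficulty is structural: by the previous step the coefficients $a_{\bd^{(k)}}$ depend on \emph{all} of the Veronese Betti numbers $\beta_{p,p+q}(\PP^2,b;d)$, and no usable closed form for these is known — producing one is the substance of Questions~\ref{q:schur} and \ref{q:vanishing}. So an unconditional proof appears to require either substantial new control of the Veronese Betti numbers themselves, or an asymptotic estimate in the first front above that is sharp enough to reduce the problem to finitely many explicitly computable $d$. I expect the latter, carried out conditionally on the normal‑distribution behavior, to be the most realistic first target.
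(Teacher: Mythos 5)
There is nothing in the paper for your argument to match: the statement is posed as a conjecture, supported only by the computed data for $d\le 6$ (see Figure~\ref{fig:BS coeffs} and Example~\ref{ex:BS coeffs}); the authors offer no proof, and your proposal does not supply one either. What you have written is a research plan, and its load-bearing step is a non sequitur. The ``asymptotic front'' is conditional on the normal-distribution conjecture \cite[Conjecture~B]{eel-betti}, which is open; but even granting it, convergence of the \emph{rescaled} row profiles to a Gaussian as $d\to\infty$ does not yield log-concavity (or unimodality) of the Boij--S\"oderberg coefficients for any \emph{fixed} $d$. Each coefficient $a_{\bd^{(k)}}$ produced by the greedy elimination is a ratio involving iterated differences of Betti numbers drawn from \emph{several} rows of the table (for $\PP^2$ the rows $q=0,1,2$ overlap in the relevant range), so pointwise control of a limiting shape, with no quantitative error terms, gives no control of the sign of the discrete second differences of the coefficient sequence; and \cite[\S3]{eel-betti} ties the coefficients to the row profiles only asymptotically, not entry by entry. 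Moreover the conjecture asserts unimodality for \emph{all} $b,d$, so even a successful asymptotic argument would need an explicit, checkable threshold $d_0$; the available data covers only $d\le 6$ (with fragments at $d=7,8$), and nothing in your sketch produces such a $d_0$.

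The ``structural front'' is likewise not an argument yet: you would need to exhibit an actual determinantal or lattice-path formula for the $a_{\bd^{(k)}}$ before a Lindstr\"om--Gessel--Viennot or injection argument can even be formulated, and no such formula is known --- as you note yourself, the coefficients depend on all the Veronese Betti numbers $\beta_{p,p+q}(\PP^2,b;d)$, for which no closed form exists (this is exactly Questions~\ref{q:schur} and~\ref{q:vanishing}). One smaller caution: your reduction of unimodality to log-concavity is fine for a positive sequence, but be aware that you are then trying to prove something strictly stronger than the conjecture, and the data in Figure~\ref{fig:BS coeffs} has not been checked for log-concavity, so you may be setting up a target that is false even where unimodality holds. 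In short: the statement remains a conjecture, and your proposal identifies plausible directions but contains no step that closes the gap, even conditionally, for a single new value of $d$.
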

\noindent We restrict this conjecture to $\PP^2$ because more complicated overlaps between rows will arise for $\PP^n$ with $n\geq 3$ and $d\gg 0$, and our data is insufficient to shed light on that.

One can sharpen Question~\ref{q:BS coeffs} in other ways: does one of the coefficients dominate, as in~\cite{erman-high-degree}? Under appropriate rescaling, will the coefficients converge to a reasonable function in the limit? 

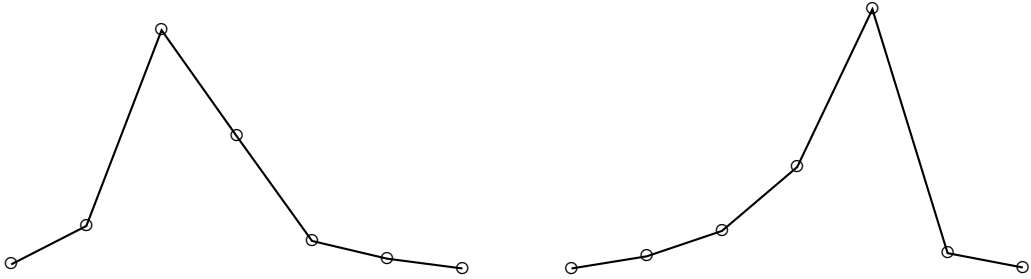
\begin{figure}
\begin{tikzpicture}[yscale=0.4]
\draw (0,.26) node{$\circ$};
\draw (1,1.54) node{$\circ$};
\draw (2,8.05) node{$\circ$};
\draw (3,4.52) node{$\circ$};
\draw (4,1.04) node{$\circ$};
\draw(5,.45) node{$\circ$};
\draw(6, .125) node{$\circ$};
\draw[-,thick] (0,.26)--(1,1.54)--(2,8.05)--(3,4.52)--(4,1.04)--(5,.45)--(6,.125);
\end{tikzpicture}
\qquad
\begin{tikzpicture}[yscale=1.1]
\draw (0,.060696) node{$\circ$};
\draw (1,.207654) node{$\circ$};
\draw (2,.514584) node{$\circ$};
\draw (3,1.28747) node{$\circ$};
\draw (4,3.1955) node{$\circ$};
\draw(5,.245905) node{$\circ$};
\draw(6, .0722298) node{$\circ$};
\draw[-,thick](0,.060696)-- (1,.207654)--(2,.514584)--(3,1.28747) --(4,3.1955) --(5,.245905)--(6, .0722298);
\end{tikzpicture}
\caption{We plot the Boij-S\"oderberg coefficients of: $\cO_{\PP^2}(3)$ under the embedding by $d=5$ (on the left) and $\cO_{\PP^2}$ under the embedding by $d=6$ (on the right). See also Example~\ref{ex:BS coeffs}.}
\label{fig:BS coeffs}
\end{figure}

\begin{example}\label{ex:BS coeffs}
We compute the Betti table $\beta(\PP^2,3;5)$ to be

{\fontsize{5}{6}
\[
\begin{array}{*{19}c}
0&1&2&3&4&5&6&7&8&9&10&11&12&13&14&15&16&17&18\\
10&165&1260&5865&18360&39900&58695&49419&12870&2002&\text
     {.}&\text{.}&\text{.}&\text{.}&\text{.}&\text{.}&\text{.}&\text{.}&\text{.}\\
\text{.}&\text{.}&\text{.}&\text{.}&120&1575&9639&52650&172172&291720&
     338130&291720&192780&97920&37740&10710&2115&260&15\\
\text{.}&\text{.}&\text{.}&\text{.}&\text{.}&\text{.}&\text{.}&\text{.}&\text{.}&\text{.}&\text{.}&\text{.}&\text{.}&\text{.}&\text{.}&\text{.}&\text{.}&\text{.}&\text
     {.}\\
     \end{array}
\]
}

\noindent The Boij-S\"oderberg coefficients are massive. For instance, the first coefficient is $2636271525888000$. To make the coefficients more reasonable, we rescale by $10^{-16}$ and round off, yielding the sequence of (rescaled) Boij-S\"oderberg coefficients
\[
(.263627 \ , \ 1.5441 \ , \ 8.05149 \ , \  4.52584 \ , \ 1.04027 \ , \ .455071 \ , \ .125537)
\]
Thes are plotted on the left in Figure~\ref{fig:BS coeffs}.
\end{example}

\subsection{Unimodality}\label{subsec:unimodal}
Many natural statistics associated to the syzygies of Veronese embeddings appear to always behave unimodally. This leads us to propose the following question.
\begin{question}\label{q:unimodal}
Fix $d,n,b$ and $q$. When is each of the following a unimodal function of $p$?
\begin{enumerate}[noitemsep]
	\item The rank of $K_{p,q}(\PP^n,b;d)$;
	\item The number of distinct irreducible Schur modules appearing in $K_{p,q}(\PP^n,b;d)$;
	\item The total number of irreducible Schur modules appearing in $K_{p,q}(\PP^n,b;d)$;
	\item The largest multiplicity of a Schur module in $K_{p,q}(\PP^n, b;d)$;
	\item\label{item:dom} The number of dominant weights in $K_{p,q}(\PP^n,b;d)$.
\end{enumerate}
\end{question}
The data suggests that these functions are nearly always unimodal.  This would not be surprising, especially in light of the conjectural normally distributed behavior of the Betti numbers. However, proving unimodality of one of the above functions might be a more tractable first step towards Question~\ref{q:normal distribution}.

\begin{remark}
In Question~\ref{q:unimodal}\eqref{item:dom} unimodality fails for $d=3$ and $b=0$. See Example~\ref{ex:lattice}.  This is the only known failure of unimodality that we are aware of.
\end{remark}

\begin{example}
On $\PP^2$, we consider the case $b=2$ and $d=4$ and $q=0$. See Appendix 1 for the Betti table. The rank of $K_{p,0}(\PP^2,2;4)$ is
$
(6, 62, 276, 660, 825, 252)
$ for $0\leq p \leq 5$, and the rank is $0$ for other values of $p$.
The number of irreducible Schur modules (with multiplicity) in $K_{p,0}(\PP^2,2;4)$ is
$
(1, 2, 7, 12, 13, 5)
$
for $0\leq p \leq 5$. 
\end{example}

\begin{example}
On $\PP^2$ we consider the case $b=2$ and $d=5$ and $q=0$.
The number of irreducible Schur modules (with multiplicity) for $K_{p,0}(\PP^2,2;5)$ is plotted in Figure~\ref{fig:unimodal}.
The number of dominant weights in $K_{p,0}(\PP^2,2;5)$ is $(1, 2, 2, 3, 3, 3, 4, 3, 3, 3, 3, 3, 2, 2, 1)$, and is also plotted in Figure~\ref{fig:unimodal}.

\begin{figure}
\begin{tikzpicture}[xscale=0.36, yscale=0.15]
\draw[->] (3,0) -- (17,0);
\draw[->] (3,0) -- (3,23);
\draw (2,0) node{\tiny $0$};
\draw (2,20) node{\tiny $2000$};
\draw (-0.5,12) node{\tiny\# Schur modules};
\draw (-0.5,10) node{\tiny in $K_{p,1}(2;5)$};
\draw (3,-2) node{\tiny $3$};
\draw (10,-2) node{\tiny $p$};
\draw (17,-2) node{\tiny $17$};
\draw[-,thick] (17, .01)--(16, .15)--(15, 1.67)--(14, 5.57)--(13, 11.67)--(12, 18.19)--(11, 22.28)--(10,21.94)--(9, 17.57)--(8, 11.41)--(7, 5.95)--(6, 2.46)--(5, .78)--(4, .17)--(3,.02);
\end{tikzpicture}
\quad
\begin{tikzpicture}[xscale=0.36, yscale=.75]
\draw[->] (3,0) -- (17,0);
\draw[->] (3,0) -- (3,5);
\draw (2,0) node{\tiny $0$};
\draw (2,4) node{\tiny $4$};
\draw (-0.5,2.4) node{\tiny\# dominant};
\draw (-0.5,2.0) node{\tiny weights in};
\draw (-0.5,1.6) node{\tiny $K_{p,1}(2;5)$};
\draw (3,-.5) node{\tiny $3$};
\draw (10,-.5) node{\tiny $p$};
\draw (17,-.5) node{\tiny $17$};
\draw[-,thick] (17, 1)--(16, 2)--(15, 2)--(14, 3)--(13, 3)--(12, 3)--(11, 4)--(10, 3)--(9, 3)--(8, 3)--(7, 3)--(6, 3)--(5, 2)--(4, 2)--(3, 1);
\end{tikzpicture}
\caption{The left plots the number of Schur modules (with multiplicity) in $K_{p,1}(\PP^2,2;5)$ for $3\leq p \leq 17$. The right plots the number of dominant weights for the same input. Both plots are unimodal, as suggested in Question~\ref{q:unimodal}.}
\end{figure}
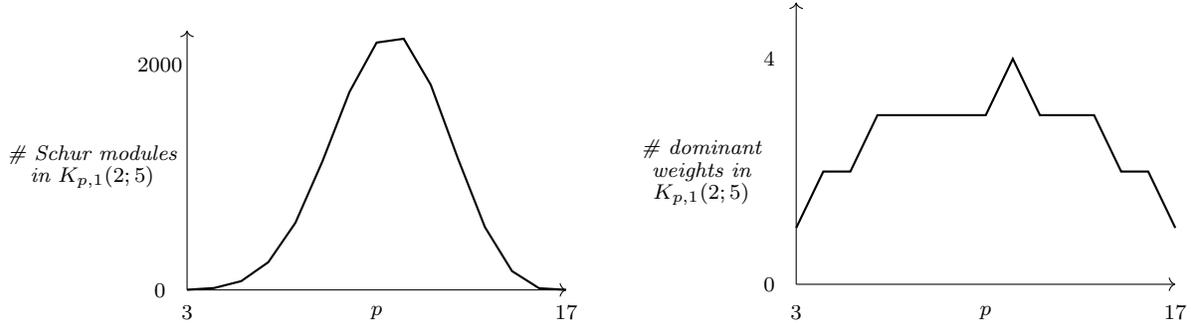
\label{fig:unimodal}
\end{example}

\subsection{Redundancy}\label{subsec:redundancy}
The following question was first posed to us, in various forms, by Eisenbud, Lazarsfeld, and Raicu. We focus on the case of $\PP^2$ for simplicity.
\begin{question}\label{q:redudancy}
Fix $b$ and let $d\gg 0$. Are most syzygies of $\cO_{\PP^2}(b)$ under the $d$-uple embedding determined by the Hilbert series? Or are most syzygies irrelevant to the Hilbert series? More precisely, for which $\epsilon>0$ and $d\gg 0$ can we find some $(p,q)$ where 
	\begin{equation*}\label{eqn:epsilon}
	\left|1 - \tfrac{\dim K_{p,q}(\PP^2,b;d)} {\dim K_{p-1,q+1}(\PP^2,b;d)}\right| < \epsilon?
	\end{equation*}
\end{question}
Our data does not provide a clear indication of what to expect for this question. For $d=4$, the entry with the highest proportion of ``redundant'' syzygies comes in the case $b=2$, where we have $K_{5,0}(2;4)=252$ and $K_{4,1}(2;4) = 450$ and
\[
\left|1 - \tfrac{\dim K_{5,0}(\PP^2,2;4)} {\dim K_{4,1}(\PP^2,2;4)}\right|=0.44.
\]
For $d=5$ and $d=6$, the most redundant entries also occur for $b=2$ and $(p,q)=(5,0)$. In the case $d=5$ we have $\left|1 - \tfrac{\dim K_{5,0}(\PP^2,2;5)} {\dim K_{5,1}(\PP^2,2;5)}\right|\approx 0.59$, and for $d=6$ the corresponding value is $\approx 0.57$. It would be interesting to better understand what is possible in the limit as $d\to \infty$.

One could also questions about redundancy of multigraded Betti numbers or of Schur functors. For instance, a folklore question had been to produce an example where redundant Schur modules appear. For $b=0$, we find that first case arises when $d=5$, where both $K_{14,1}(0;5)$ and $K_{13,2}(0;5)$ have a copy of $\bS_{(30,25,20)}$. 
\begin{question}
Do such redundant Schur modules appear frequently or only sporadically?
\end{question}
\begin{example}
For all $d\leq 5$ and all $0\leq b <4$, the only redundant Schur modules that arise are $\bS_{(30,25,20)}$ which arises in both $K_{14,1}(0;5)$ and $K_{13,2}(0;5)$; $\bS_{(30,24,21)}$ which arises in both $K_{14,1}(0;5)$ and $K_{13,2}(0;5)$; and the dual examples for $b=3$.
\end{example}

%%--this is the code used above.
%%needsPackage "SchurVeronese"
%%redundantEntryTest = (B,p,q) -> abs(1 - (B#(p,q) / B#(p-1,q+1)))_RR
%%redundantEntryTest(tb,8,0)
%%redundantTest = B ->(
%%  P := max apply(keys B, l-> l_0);
%%  rL := {};
%%  scan(toList(1..P), p->(
%%	  if (B#(p,0))*(B#(p-1,1)) != 0 then rL = rL|{redundantEntryTest(B,p,0)}
%%	));
%%  scan(toList(1..P), p->(
%%	  if (B#(p,1))*(B#(p-1,2)) != 0 then rL = rL|{redundantEntryTest(B,p,1)}
%%	));
%%  min rL
%%  )
%%redundantTest(B)
%%
%%--d=4
%%L = apply(toList(0..3),b-> totalBetti(4,2,b));
%%apply(L,l-> redundantTest(l))
%%1 - (252/450)
%%--d=5
%%L = apply(toList(0..4),b-> totalBetti(5,2,b));
%%apply(L,l-> redundantTest(l))
%%--d=6
%%L = apply(toList(0..3),b-> totalBetti(6,2,b));
%%apply(L,l-> redundantTest(l))

%\subsection{Peak value}
%
%
%\begin{conj}
%Let $b=0$ or $1$ then 
%\[
%\max_{p,q}K_{p,q}(\PP^n,b;d)=K_{\lceil \frac{\text{pd}-1}{2}\rceil,1}(\PP^n,b;d),
%\]
%unless $n=1$ and $b=d-1$ or we are dual to $K_{p,q}(\PP^n,d;0)$ in which case:
%\[
%\max_{p,q}K_{p,q}(\PP^n,b;d)=K_{\lceil \frac{\text{pd}-1}{2}\rceil,0}(\PP^n,b;d).
%\]
%\end{conj}
%
%\begin{conj}
%Fix some $n$ and $d$. The function
%\[
%p \mapsto \sum_q \# \dw K_{p,q}(\PP^n,b;d)
%\]
%is symmetric in $p$ and independent of $b$.
%\end{conj}
%
%\deej{This is true for d=2,3,4,5. For d=6, it is true for b=0,1,2,3, but for b=4 the function is not symmetric. See M2 file.}
%t
%

\begin{bibdiv}
\begin{biblist}

\bib{wcdl}{article}{
	author = {Castryck, Wouter},
	author = {Cools, Filip},
	author = {Demeyer, Jeroen},
	author = {Lemmens, Alexander},
	title = {Computing graded Betti tables of toric surfaces},
	note = {arXiv:1606.08181},
	}

\bib{CHTC}{misc}{
  label={CHTC},
  title={Center for High Throughput Computing},
  note = {Information at \url{http://chtc.cs.wisc.edu/}},
}

\bib{eel-betti}{article}{
  author={Ein, Lawrence},
  author={Erman, Daniel},
  author={Lazarsfeld, Robert},
  title={Asymptotics of random Betti tables},
  journal={J. Reine Angew. Math.},
  volume={702},
  date={2015},
  pages={55--75},
%  issn={0075-4102},
%  review={\MR{3341466}},
%  doi={10.1515/crelle-2013-0032},
}

\bib{eel-quick}{article}{
  author={Ein, Lawrence},
  author={Erman, Daniel},
  author={Lazarsfeld, Robert},
  title={A quick proof of nonvanishing for asymptotic syzygies},
  journal={Algebr. Geom.},
  volume={3},
  date={2016},
  number={2},
  pages={211--222},
%  issn={2214-2584},
%  review={\MR{3477954}},
%  doi={10.14231/AG-2016-010},
}

\bib{ein-lazarsfeld-np}{article}{
   author={Ein, Lawrence},
   author={Lazarsfeld, Robert},
   title={Syzygies and Koszul cohomology of smooth projective varieties of
   arbitrary dimension},
   journal={Invent. Math.},
   volume={111},
   date={1993},
   number={1},
   pages={51--67},
%   issn={0020-9910},
   %%review={\MR{1193597 (93m:13006)}},
}
		
\bib{ein-lazarsfeld}{article}{
  author={Ein, Lawrence},
  author={Lazarsfeld, Robert},
  title={Asymptotic syzygies of algebraic varieties},
  journal={Invent. Math.},
  volume={190},
  date={2012},
  number={3},
  pages={603--646},
%  issn={0020-9910},
%  review={\MR{2995182}},
%  doi={10.1007/s00222-012-0384-5},
}

\bib{M2-book}{collection}{
  title={Computations in algebraic geometry with Macaulay 2},
  series={Algorithms and Computation in Mathematics},
  volume={8},
  editor={Eisenbud, David},
  editor={Grayson, Daniel R.},
  editor={Stillman, Michael},
  editor={Sturmfels, Bernd},
  publisher={Springer-Verlag, Berlin},
  date={2002},
  pages={xvi+329},
%  isbn={3-540-42230-7},
%  review={\MR{1949544}},
%  doi={10.1007/978-3-662-04851-1},
}

\bib{eisenbud-schreyer-JAMS}{article}{
  author={Eisenbud, David},
  author={Schreyer, Frank-Olaf},
  title={Betti numbers of graded modules and cohomology of vector bundles},
  journal={J. Amer. Math. Soc.},
  volume={22},
  date={2009},
  number={3},
  pages={859--888},
%  issn={0894-0347},
%  review={\MR{2505303}},
%  doi={10.1090/S0894-0347-08-00620-6},
}

\bib{erman-high-degree}{article}{
  author={Erman, D.},
  title={The Betti table of a high-degree curve is asymptotically pure},
  conference={
   title={Recent advances in algebraic geometry},
  },
  book={
   series={London Math. Soc. Lecture Note Ser.},
   volume={417},
   publisher={Cambridge Univ. Press, Cambridge},
  },
  date={2015},
  pages={200--206},
%  review={\MR{3380450}},
}

\bib{floystad-expository}{article}{
  author={Fl\o ystad, Gunnar},
  title={Boij-S\"oderberg theory: introduction and survey},
  conference={
   title={Progress in commutative algebra 1},
  },
  book={
   publisher={de Gruyter, Berlin},
  },
  date={2012},
  pages={1--54},
%  review={\MR{2932580}},
}

\bib{fulton-harris}{book}{
  author={Fulton, William},
  author={Harris, Joe},
  title={Representation theory},
  series={Graduate Texts in Mathematics},
  volume={129},
  note={A first course;
  Readings in Mathematics},
  publisher={Springer-Verlag, New York},
  date={1991},
  pages={xvi+551},
%  isbn={0-387-97527-6},
%  isbn={0-387-97495-4},
%  review={\MR{1153249}},
%  doi={10.1007/978-1-4612-0979-9},
}

\bib{LUSOL}{article}{
  author={Gill, Philip E.},
  author={Murray, Walter},
  author={Saunders, Michael A.},
  author={Wright, Margaret H.},
  title={Maintaining $LU$ factors of a general sparse matrix},
  journal={Linear Algebra Appl.},
  volume={88/89},
  date={1987},
  pages={239--270},
%  issn={0024-3795},
%  review={\MR{882450}},
%  doi={10.1016/0024-3795(87)90112-1},
}

\bib{golub-matrix-computations}{book}{
  author={Golub, Gene H.},
  author={Van Loan, Charles F.},
  title={Matrix computations},
  series={Johns Hopkins Studies in the Mathematical Sciences},
  edition={3},
  publisher={Johns Hopkins University Press, Baltimore, MD},
  date={1996},
%  pages={xxx+698},
%  isbn={0-8018-5413-X},
%  isbn={0-8018-5414-8},
%  review={\MR{1417720}},
}	

\bib{green-I}{article}{
  author={Green, Mark L.},
  title={Koszul cohomology and the geometry of projective varieties},
  journal={J. Differential Geom.},
  volume={19},
  date={1984},
  number={1},
  pages={125--171},
%  issn={0022-040X},
%  review={\MR{739785}},
}

\bib{green-II}{article}{
  author={Green, Mark L.},
  title={Koszul cohomology and the geometry of projective varieties. II},
  journal={J. Differential Geom.},
  volume={20},
  date={1984},
  number={1},
  pages={279--289},
%  issn={0022-040X},
%  review={\MR{772134}},
}

\bib{greco-vero-d5}{article}{
  author={Greco, Ornella},
  author={Martino, Ivan},
  title={Syzygies of the Veronese modules},
  journal={Comm. Algebra},
  volume={44},
  date={2016},
  number={9},
  pages={3890--3906},
%  issn={0092-7872},
%  review={\MR{3503390}},
%  doi={10.1080/00927872.2015.1027389},
}		
		
\bib{gu-algorithms-qr}{article}{
  author={Gu, Ming},
  author={Eisenstat, Stanley C.},
  title={Efficient algorithms for computing a strong rank-revealing QR
  factorization},
  journal={SIAM J. Sci. Comput.},
  volume={17},
  date={1996},
  number={4},
  pages={848--869},
%  issn={1064-8275},
%  review={\MR{1395351}},
%  doi={10.1137/0917055},
}

\bib{HTCondor}{misc}{
  label={HTC},
  author={HTCondor Team, Computer Sciences Department, University of Wisconsin-Madison},
  title = {HTCondor, open source distributed computing software},
  note = {Information at \url{https://research.cs.wisc.edu/htcondor/}},
}

\bib{M2}{misc}{
  label={M2},
  author={Grayson, Daniel~R.},
  author={Stillman, Michael~E.},
  title = {Macaulay 2, a software system for research
	  in algebraic geometry},
  note = {Available at \url{http://www.math.uiuc.edu/Macaulay2/}},
}

\bib{ML}{misc}{
label = {ML},
year = {2010},
author = {MATLAB},
title = {version 7.10.0 (R2010a)},
publisher = {The MathWorks Inc.},
address = {Natick, Massachusetts},
}
\bib{OpenScienceGrid}{misc}{
  label={OSG},
  title = {Open Science Grid},
  note = {Information at \url{https://www.opensciencegrid.org/}},
}

\bib{sam-super}{article}{
   author={Sam, Steven V},
   title={Derived supersymmetries of determinantal varieties},
   journal={J. Commut. Algebra},
   volume={6},
   date={2014},
   number={2},
   pages={261--286},
%   issn={1939-0807},
%   review={\MR{3249839}},
}

\bib{condor-practice}{article}{
 author  = {Thain, Douglas},
 author = {Tannenbaum, Todd},
 author = {Livny, Miron},
 title   = {Distributed computing in practice: the Condor experience.},
 journal  = {Concurrency - Practice and Experience},
 volume  = {17},
 number  = {2-4},
 year   = {2005},
 pages   ={323-356},
}

\end{biblist}
\end{bibdiv}

\newpage

\begin{landscape}

\section*{Appendix 1: Total Betti Numbers}

Here we include the Betti tables for $n=2$, $d=4,5$, and $0\leq b\leq d$.  Additional data such as multigraded Betti numbers, and further examples is available at \href{https://syzygydata.com}{syzygydata.com}.

{\fontsize{11}{13.2}
\[
\beta(2,0;4)=
\begin{array}{*{19}c}
0&1&2&3&4&5&6&7&8&9&10&11&12\\
1&\text{.}&\text{.}&\text{.}&\text{.}&\text{.}&\text{.}&\text{.}&\text{.}&\text{.}&\text{.}&\text{.}&\text{.}\\
\text{.}&75&536&1947&4488&7095&7920&6237&3344&1089&120&\text{.}&\text{.}\\
\text{.}&\text{.}&\text{.}&\text{.}&\text{.}&\text{.}&\text{.}&\text{.}&\text{.}&\text{.}&55&24&3
\end{array}
\]
}

{\fontsize{11}{13.2}
\[
\beta(2,1;4)=
\begin{array}{*{19}c}
0&1&2&3&4&5&6&7&8&9&10&11&12\\
3&24&55&\text{.}&\text{.}&\text{.}&\text{.}&\text{.}&\text{.}&\text{.}&\text{.}&\text{.}&\text{.}\\
\text{.}&\text{.}&120&1089&3344&6237&7920&7095&4488&1947&536&75&\text{.}\\
\text{.}&\text{.}&\text{.}&\text{.}&\text{.}&\text{.}&\text{.}&\text{.}&\text{.}&\text{.}&\text{.}&\text{.}&1
\end{array}
\]
}

{\fontsize{11}{13.2}
\[
\beta(2,2;4)=
\begin{array}{*{19}c}
0&1&2&3&4&5&6&7&8&9&10&11&12\\
6&62&276&660&825&252&\text{.}&\text{.}&\text{.}&\text{.}&\text{.}&\text{.}&\text{.}\\
\text{.}&\text{.}&\text{.}&55&450&2376&4488&4950&3630&1804&588&114&10\\
\text{.}&\text{.}&\text{.}&\text{.}&\text{.}&\text{.}&\text{.}&\text{.}&\text{.}&\text{.}&\text{.}&\text{.}&\text{.}
\end{array}
\]
}

{\fontsize{11}{13.2}
\[
\beta(2,3;4)=
\begin{array}{*{19}c}
0&1&2&3&4&5&6&7&8&9&10&11&12\\
10&114&588&1804&3630&4950&4488&2376&450&55&\text{.}&\text{.}&\text{.}\\
\text{.}&\text{.}&\text{.}&\text{.}&\text{.}&\text{.}&\text{.}&252&825&660&276&62&6\\
\text{.}&\text{.}&\text{.}&\text{.}&\text{.}&\text{.}&\text{.}&\text{.}&\text{.}&\text{.}&\text{.}&\text{.}&\text{.}
\end{array}
\]
}
\end{landscape}

\newpage

\begin{landscape}
{\fontsize{9}{10.8}
\[
\beta(2,0;5)=
\begin{array}{*{19}c}
0&1&2&3&4&5&6&7&8&9&10&11&12&13&14&15&16&17&18\\
1&\text{.}&\text{.}&\text{.}&\text{.}&\text{.}&\text{.}&\text{.}&\text{.}&\text{.}&\text{.}&\text{.}&\text{.}&\text{.}&\text{.}&\text{.}&\text{.}&\text{.}&\text{.}\\
\text{.}&165&1830&10710&41616&117300&250920&
     417690&548080&568854&464100&291720&134640&39780&4858&375&\text{.}&\text{.}&\text{.}\\
\text{.}&\text{.}&\text{.}&\text{.}&\text{.}&\text{.}&\text{.}&\text{.}&\text{.}&\text{.}&\text{.}&\text{.}&\text{.}&2002&4200&2160&595&90&6\\ 
\end{array}
\]
}

{\fontsize{9}{10.8}
\[
\beta(2,1;5)=
\begin{array}{*{19}c}
0&1&2&3&4&5&6&7&8&9&10&11&12&13&14&15&16&17&18\\
3&35&120&\text{.}&\text{.}&\text{.}&\text{.}&\text{.}&\text{.}&\text{.}&\text{.}&\text{.}&\text{.}&\text{.}&\text{.}&\text{.}&\text{.}&\text{.}&\text{.}\\
\text{.}&\text{.}&405&5865&29988&97920&231540&417690 &590070&661232&590070&417690&231540&97920&29988&5865&405&\text{.}&\text{.}\\
\text{.}&\text{.}&\text{.}&\text{.}&\text{.}&\text{.}&\text{.}&\text{.}&\text{.}&\text{.}&\text{.}&\text{.}&\text{.}&\text{.}&\text{.}&\text{.}&120&35&3\\
\end{array}
\]
}

{\fontsize{9}{10.8}
\[
\beta(2,2;5)=
\begin{array}{*{19}c}
0&1&2&3&4&5&6&7&8&9&10&11&12&13&14&15&16&17&18\\
   6&90&595&2160&4200&2002&\text{.}&\text{.}&\text{.}&\text{.}&\text{.}&\text{.}&\text{.}&\text{.}&\text{.}&\text{.}&\text{.}&\text{.}&\text{.}\\
\text{.}&\text{.}&\text{.}&375&4858&39780&134640&291720&464100&568854&548080&417690&250920&117300&41616&10710&1830&165&\text{.}\\
\text{.}&\text{.}&\text{.}&\text{.}&\text{.}&\text{.}&\text{.}&\text{.}&\text{.}&\text{.}&\text{.}&\text{.}&\text{.}&\text{.}&\text{.}&\text{.}&\text{.}&\text{.}&1\\
\end{array}
\]
}

{\fontsize{9}{10.8}
\[
\beta(2,3;5)=
\begin{array}{*{19}c}
0&1&2&3&4&5&6&7&8&9&10&11&12&13&14&15&16&17&18\\
10&165&1260&5865&18360&39900&58695&49419&12870&2002&\text{.}&\text{.}&\text{.}&\text{.}&\text{.}&\text{.}&\text{.}&\text{.}&\text{.}\\   
\text{.}&\text{.}&\text{.}&\text{.}&120&1575&9639&52650&172172&291720&338130&291720&192780&97920&37740&10710&2115&260&15\\
\text{.}&\text{.}&\text{.}&\text{.}&\text{.}&\text{.}&\text{.}&\text{.}&\text{.}&\text{.}&\text{.}&\text{.}&\text{.}&\text{.}&\text{.}&\text{.}&\text{.}&\text{.}&\text{.}\\
     \end{array}
     \]
     }
     
{\fontsize{9}{10.8}
\[
\beta(2,4;5)=
\begin{array}{*{19}c}
0&1&2&3&4&5&6&7&8&9&10&11&12&13&14&15&16&17&18\\
15&260&2115&10710&37740&97920&192780&291720&338130&291720
      &172172&52650&9639&1575&120&\text{.}&\text{.}&\text{.}&\text{.}\\
      
      \text{.}&\text{.}&\text{.}&\text{.}&\text{.}&\text{.}&\text{.}&\text{.}&\text{.}&2002
      &12870&49419&58695&39900&18360&5865&1260&165&10\\
      
      \text{.}&\text{.}&\text{.}&\text{.}&\text{.}&\text{.}&\text{.}&\text{.}&\text{.}&\text{.}&\text
      {.}&\text{.}&\text{.}&\text{.}&\text{.}&\text{.}&\text{.}&\text{.}&\text{.}\\
\end{array}
\]
}

\end{landscape}

\newpage
\section*{Appendix 2: Schur Module Decompositions}\label{appendix:schur module}

Here we include the Schur module decomposition of $K_{p,q}(5,0)$ and $K_{p,q}(5,3)$ for all $(p,q)$ in the relevant range. The dominant Schur modules are colored in red. Complete Schur module decompositions for the remainder of the computed examples is available at \href{https://syzygydata.com}{syzygydata.com}.

{\fontsize{8}{9.6}
\begin{align*}
K_{14,1}(5;0)\cong{\color{red}\bS_{(34,21,20)}}&\oplus{\color{red}\bS_{(33,25,17)}}\oplus\bS_{(33,24,18)}\oplus\bS_{(33,23,19)}\oplus\bS_{(33,22,20)}\oplus\bS_{(32,25,18)}\oplus\bS_{(32,24,19)}\oplus\bS_{(32,23,20)}\oplus\bS_{(32,22,21)}\\
&\oplus\bS_{(31,25,19)}\oplus\bS_{(31,24,20)}\oplus\bS_{(31,23,21)}\oplus\bS_{(30,25,20)}\oplus\bS_{(30,24,21)}\oplus\bS_{(29,25,21)}
\end{align*}
\begin{align*}
K_{15,1}(5;0)\cong{\color{red}\bS_{(34,25,21)}}
\end{align*}
\begin{align*}
K_{13,2}(5;0)\cong{\color{red}\bS_{(30,30,15)}}&\oplus\bS_{(30,28,17)}\oplus\bS_{(30,27,18)}\oplus\bS_{(30,26,19)}\oplus\bS_{(30,25,20)}\oplus\bS_{(30,24,21)}\oplus\bS_{(29,26,20)}\oplus\bS_{(29,24,22)}\oplus\bS_{(28,28,19)}\\
&\oplus\bS_{(28,27,20)}\oplus\bS_{(28,26,21)}^{2}\oplus\bS_{(28,25,22)}\oplus\bS_{(28,24,23)}\oplus\bS_{(27,26,22)}\oplus\bS_{(27,24,24)}\oplus\bS_{(26,26,23)}
\end{align*}
\begin{align*}
K_{14,2}(5;0)\cong{\color{red}\bS_{(32,30,18)}}&\oplus\bS_{(32,28,20)}^{2}\oplus\bS_{(32,27,21)}\oplus\bS_{(32,26,22)}^{2}\oplus\bS_{(32,24,24)}^{2}\oplus\bS_{(31,30,19)}\oplus\bS_{(31,29,20)}\oplus\bS_{(31,28,21)}^{2}\oplus\bS_{(31,27,22)}^{2}\\
&\oplus\bS_{(31,26,23)}^{2}\oplus\bS_{(31,25,24)}\oplus\bS_{(30,30,20)}\oplus\bS_{(30,29,21)}\oplus\bS_{(30,28,22)}^{3}\oplus\bS_{(30,27,23)}^{2}\oplus\bS_{(30,26,24)}^{3}\oplus\bS_{(29,28,23)}^{2}\\
&\oplus\bS_{(29,27,24)}^{2}\oplus\bS_{(29,26,25)}\oplus\bS_{(28,28,24)}^{2}\oplus\bS_{(28,27,25)}\oplus\bS_{(28,26,26)}
\end{align*}
\begin{align*}
K_{5,0}(5;3)\cong{\color{red}\bS_{(20,4,4)}}&\oplus{\color{red}\bS_{(19,7,2)}}\oplus\bS_{(19,6,3)}\oplus\bS_{(19,5,4)}\oplus\bS_{(18,8,2)}\oplus\bS_{(18,7,3)}^{2}\oplus\bS_{(18,6,4)}^{3}\oplus\bS_{(17,9,2)}^{2}\oplus\bS_{(17,8,3)}^{4}\\
&\oplus\bS_{(17,7,4)}^{6}\oplus\bS_{(17,6,5)}^{3}\oplus\bS_{(16,10,2)}^{2}\oplus\bS_{(16,9,3)}^{5}\oplus\bS_{(16,8,4)}^{9}\oplus\bS_{(16,7,5)}^{6}\oplus\bS_{(16,6,6)}^{4}\oplus\bS_{(15,11,2)}^{3}\\
&\oplus\bS_{(15,10,3)}^{6}\oplus\bS_{(15,9,4)}^{10}\oplus\bS_{(15,8,5)}^{11}\oplus\bS_{(15,7,6)}^{8}\oplus\bS_{(14,12,2)}\oplus\bS_{(14,11,3)}^{5}\oplus\bS_{(14,10,4)}^{11}\oplus\bS_{(14,9,5)}^{12}\\
&\oplus\bS_{(14,8,6)}^{13}\oplus\bS_{(14,7,7)}^{4}\oplus\bS_{(13,13,2)}\oplus\bS_{(13,12,3)}^{3}\oplus\bS_{(13,11,4)}^{8}\oplus\bS_{(13,10,5)}^{12}\oplus\bS_{(13,9,6)}^{13}\oplus\bS_{(13,8,7)}^{9}\\
&\oplus\bS_{(12,12,4)}^{3}\oplus\bS_{(12,11,5)}^{7}\oplus\bS_{(12,10,6)}^{12}\oplus\bS_{(12,9,7)}^{9}\oplus\bS_{(12,8,8)}^{5}\oplus\bS_{(11,11,6)}^{4}\oplus\bS_{(11,10,7)}^{7}\oplus\bS_{(11,9,8)}^{4}\\
&\oplus\bS_{(10,10,8)}^{3}\oplus\bS_{(10,9,9)}
\end{align*}
\begin{align*}
K_{6,0}(5;3)\cong{\color{red}\bS_{(22,7,4)}}&\oplus{\color{red}\bS_{(21,9,3)}}\oplus\bS_{(21,8,4)}\oplus\bS_{(21,7,5)}^{2}\oplus\bS_{(20,9,4)}^{3}\oplus\bS_{(20,8,5)}^{3}\oplus\bS_{(20,7,6)}^{2}\oplus\bS_{(19,11,3)}^{2}\oplus\bS_{(19,10,4)}^{4}\\
&\oplus\bS_{(19,9,5)}^{7}\oplus\bS_{(19,8,6)}^{5}\oplus\bS_{(19,7,7)}^{4}\oplus\bS_{(18,12,3)}\oplus\bS_{(18,11,4)}^{5}\oplus\bS_{(18,10,5)}^{7}\oplus\bS_{(18,9,6)}^{10}\oplus\bS_{(18,8,7)}^{6}\\
&\oplus\bS_{(17,13,3)}^{2}\oplus\bS_{(17,12,4)}^{5}\oplus\bS_{(17,11,5)}^{12}\oplus\bS_{(17,10,6)}^{12}\oplus\bS_{(17,9,7)}^{14}\oplus\bS_{(17,8,8)}^{4}\oplus\bS_{(16,13,4)}^{5}\oplus\bS_{(16,12,5)}^{8}\\
&\oplus\bS_{(16,11,6)}^{15}\oplus\bS_{(16,10,7)}^{15}\oplus\bS_{(16,9,8)}^{10}\oplus\bS_{(15,15,3)}^{2}\oplus\bS_{(15,14,4)}^{2}\oplus\bS_{(15,13,5)}^{9}\oplus\bS_{(15,12,6)}^{13}\oplus\bS_{(15,11,7)}^{19}\\
&\oplus\bS_{(15,10,8)}^{13}\oplus\bS_{(15,9,9)}^{8}\oplus\bS_{(14,14,5)}\oplus\bS_{(14,13,6)}^{7}\oplus\bS_{(14,12,7)}^{11}\oplus\bS_{(14,11,8)}^{15}\oplus\bS_{(14,10,9)}^{8}\oplus\bS_{(13,13,7)}^{8}\\
&\oplus\bS_{(13,12,8)}^{8}\oplus\bS_{(13,11,9)}^{11}\oplus\bS_{(13,10,10)}^{3}\oplus\bS_{(12,12,9)}^{3}\oplus\bS_{(12,11,10)}^{3}\oplus\bS_{(11,11,11)}^{2}
\end{align*}
\begin{align*}
K_{7,0}(5;3)\cong{\color{red}\bS_{(24,9,5)}}&\oplus\bS_{(24,7,7)}\oplus\bS_{(23,10,5)}\oplus\bS_{(23,9,6)}\oplus\bS_{(23,8,7)}\oplus\bS_{(22,11,5)}^{2}\oplus\bS_{(22,10,6)}^{2}\oplus\bS_{(22,9,7)}^{4}\oplus\bS_{(21,12,5)}^{2}\\
&\oplus\bS_{(21,11,6)}^{3}\oplus\bS_{(21,10,7)}^{5}\oplus\bS_{(21,9,8)}^{4}\oplus\bS_{(20,13,5)}^{3}\oplus\bS_{(20,12,6)}^{4}\oplus\bS_{(20,11,7)}^{8}\oplus\bS_{(20,10,8)}^{5}\oplus\bS_{(20,9,9)}^{5}\\
&\oplus\bS_{(19,14,5)}^{2}\oplus\bS_{(19,13,6)}^{5}\oplus\bS_{(19,12,7)}^{9}\oplus\bS_{(19,11,8)}^{10}\oplus\bS_{(19,10,9)}^{7}\oplus\bS_{(18,15,5)}^{2}\oplus\bS_{(18,14,6)}^{4}\oplus\bS_{(18,13,7)}^{10}\\
&\oplus\bS_{(18,12,8)}^{10}\oplus\bS_{(18,11,9)}^{12}\oplus\bS_{(18,10,10)}^{2}\oplus\bS_{(17,16,5)}\oplus\bS_{(17,15,6)}^{3}\oplus\bS_{(17,14,7)}^{8}\oplus\bS_{(17,13,8)}^{12}\oplus\bS_{(17,12,9)}^{13}\\
&\oplus\bS_{(17,11,10)}^{8}\oplus\bS_{(16,16,6)}\oplus\bS_{(16,15,7)}^{5}\oplus\bS_{(16,14,8)}^{7}\oplus\bS_{(16,13,9)}^{13}\oplus\bS_{(16,12,10)}^{8}\oplus\bS_{(16,11,11)}^{6}\oplus\bS_{(15,15,8)}^{4}\\
&\oplus\bS_{(15,14,9)}^{7}\oplus\bS_{(15,13,10)}^{9}\oplus\bS_{(15,12,11)}^{6}\oplus\bS_{(14,14,10)}^{2}\oplus\bS_{(14,13,11)}^{5}\oplus\bS_{(14,12,12)}^{6}\oplus\bS_{(13,13,12)}^{20}
\end{align*}
\begin{align*}
K_{8,0}(5;3)\cong{\color{red}\bS_{(26,10,7)}}&\oplus\bS_{(25,10,8)}\oplus\bS_{(24,12,7)}\oplus\bS_{(24,11,8)}\oplus\bS_{(24,10,9)}^{2}\oplus\bS_{(23,12,8)}\oplus\bS_{(23,11,9)}\oplus\bS_{(23,10,10)}^{2}\oplus\bS_{(22,14,7)}\\
&\oplus\bS_{(22,13,8)}\oplus\bS_{(22,12,9)}^{3}\oplus\bS_{(22,11,10)}^{2}\oplus\bS_{(21,14,8)}\oplus\bS_{(21,13,9)}\oplus\bS_{(21,12,10)}^{3}\oplus\bS_{(20,16,7)}\oplus\bS_{(20,15,8)}\\
&\oplus\bS_{(20,14,9)}^{3}\oplus\bS_{(20,13,10)}^{3}\oplus\bS_{(20,12,11)}^{2}\oplus\bS_{(19,16,8)}\oplus\bS_{(19,15,9)}\oplus\bS_{(19,14,10)}^{3}\oplus\bS_{(19,13,11)}\oplus\bS_{(19,12,12)}^{2}\\
&\oplus\bS_{(18,18,7)}\oplus\bS_{(18,17,8)}\oplus\bS_{(18,16,9)}^{3}\oplus\bS_{(18,15,10)}^{3}\oplus\bS_{(18,14,11)}^{3}\oplus\bS_{(18,13,12)}^{2}\oplus\bS_{(17,16,10)}^{2}\oplus\bS_{(17,15,11)}\\
&\oplus\bS_{(17,14,12)}^{3}\oplus\bS_{(16,16,11)}^{2}\oplus\bS_{(16,15,12)}^{2}\oplus\bS_{(16,14,13)}^{2}\oplus\bS_{(15,14,14)}
\end{align*}
\begin{align*}
K_{9,0}(5;3)\cong{\color{red}\bS_{(28,10,10)}}&\oplus\bS_{(26,12,10)}\oplus\bS_{(24,14,10)}\oplus\bS_{(24,12,12)}\oplus\bS_{(22,16,10)}\oplus\bS_{(22,14,12)}\oplus\bS_{(20,18,10)}\oplus\bS_{(20,16,12)}\oplus\bS_{(20,14,14)}\\
&\oplus\bS_{(18,18,12)}\oplus\bS_{(18,16,14)}\oplus\bS_{(16,16,16)}
\end{align*}
\begin{align*}
K_{4,1}(5;3)\cong{\color{red}\bS_{(14,14,0)}}
\end{align*}
\begin{align*}
K_{5,1}(5;3)\cong{\color{red}\bS_{(18,14,1)}}\oplus\bS_{(17,14,2)}\oplus\bS_{(16,14,3)}\oplus\bS_{(15,14,4)}\oplus\bS_{(14,14,5)}
\end{align*}
\begin{align*}
K_{6,1}(5;3)\cong{\color{red}\bS_{(21,15,2)}}&\oplus\bS_{(21,14,3)}\oplus\bS_{(20,15,3)}\oplus\bS_{(20,14,4)}\oplus\bS_{(19,17,2)}\oplus\bS_{(19,16,3)}\oplus\bS_{(19,15,4)}^{2}\oplus\bS_{(19,14,5)}^{2}\oplus\bS_{(18,17,3)}\\
&\oplus\bS_{(18,16,4)}\oplus\bS_{(18,15,5)}^{2}\oplus\bS_{(18,14,6)}^{2}\oplus\bS_{(17,17,4)}\oplus\bS_{(17,16,5)}\oplus\bS_{(17,15,6)}^{2}\oplus\bS_{(17,14,7)}^{2}\oplus\bS_{(16,15,7)}\\
&\oplus\bS_{(16,14,8)}\oplus\bS_{(15,15,8)}\oplus\bS_{(15,14,9)}\oplus\bS_{(14,12,12)}^{5}\oplus\bS_{(13,13,12)}^{18}
\end{align*}
\begin{align*}
K_{7,1}(5;3)\cong{\color{red}\bS_{(24,15,4)}}&\oplus{\color{red}\bS_{(23,17,3)}}\oplus\bS_{(23,16,4)}\oplus\bS_{(23,15,5)}^{3}\oplus\bS_{(23,14,6)}\oplus\bS_{(23,13,7)}\oplus\bS_{(23,11,9)}\oplus\bS_{(22,17,4)}^{2}\oplus\bS_{(22,16,5)}^{2}\\
&\oplus\bS_{(22,15,6)}^{4}\oplus\bS_{(22,14,7)}^{2}\oplus\bS_{(22,13,8)}\oplus\bS_{(22,12,9)}\oplus\bS_{(22,11,10)}\oplus\bS_{(21,19,3)}\oplus\bS_{(21,18,4)}^{2}\oplus\bS_{(21,17,5)}^{5}\\
&\oplus\bS_{(21,16,6)}^{5}\oplus\bS_{(21,15,7)}^{8}\oplus\bS_{(21,14,8)}^{3}\oplus\bS_{(21,13,9)}^{3}\oplus\bS_{(21,12,10)}\oplus\bS_{(21,11,11)}^{2}\oplus\bS_{(20,19,4)}\oplus\bS_{(20,18,5)}^{2}\\
&\oplus\bS_{(20,17,6)}^{6}\oplus\bS_{(20,16,7)}^{5}\oplus\bS_{(20,15,8)}^{8}\oplus\bS_{(20,14,9)}^{4}\oplus\bS_{(20,13,10)}^{3}\oplus\bS_{(20,12,11)}^{2}\oplus\bS_{(19,19,5)}^{3}\oplus\bS_{(19,18,6)}^{4}\\
&\oplus\bS_{(19,17,7)}^{9}\oplus\bS_{(19,16,8)}^{8}\oplus\bS_{(19,15,9)}^{10}\oplus\bS_{(19,14,10)}^{4}\oplus\bS_{(19,13,11)}^{4}\oplus\bS_{(18,18,7)}\oplus\bS_{(18,17,8)}^{6}\oplus\bS_{(18,16,9)}^{5}\\
&\oplus\bS_{(18,15,10)}^{8}\oplus\bS_{(18,14,11)}^{3}\oplus\bS_{(18,13,12)}^{2}\oplus\bS_{(17,17,9)}^{6}\oplus\bS_{(17,16,10)}^{4}\oplus\bS_{(17,15,11)}^{7}\oplus\bS_{(17,14,12)}^{2}\oplus\bS_{(17,13,13)}^{2}\\
&\oplus\bS_{(16,15,12)}^{3}\oplus\bS_{(16,14,13)}\oplus\bS_{(15,15,13)}^{3}
\end{align*}
\begin{align*}
K_{8,1}(5;3)\cong{\color{red}\bS_{(26,17,5)}}&\oplus\bS_{(26,16,6)}\oplus\bS_{(26,15,7)}^{2}\oplus\bS_{(26,13,9)}^{2}\oplus\bS_{(26,11,11)}\oplus\bS_{(25,18,5)}\oplus\bS_{(25,17,6)}^{2}\oplus\bS_{(25,16,7)}^{3}\oplus\bS_{(25,15,8)}^{4}\\
&\oplus\bS_{(25,14,9)}^{3}\oplus\bS_{(25,13,10)}^{2}\oplus\bS_{(25,12,11)}^{2}\oplus{\color{red}\bS_{(24,20,4)}}\oplus\bS_{(24,19,5)}^{2}\oplus\bS_{(24,18,6)}^{4}\oplus\bS_{(24,17,7)}^{8}\oplus\bS_{(24,16,8)}^{8}\\
&\oplus\bS_{(24,15,9)}^{10}\oplus\bS_{(24,14,10)}^{6}\oplus\bS_{(24,13,11)}^{7}\oplus\bS_{(24,12,12)}\oplus\bS_{(23,20,5)}^{2}\oplus\bS_{(23,19,6)}^{5}\oplus\bS_{(23,18,7)}^{9}\oplus\bS_{(23,17,8)}^{13}\\
&\oplus\bS_{(23,16,9)}^{16}\oplus\bS_{(23,15,10)}^{15}\oplus\bS_{(23,14,11)}^{11}\oplus\bS_{(23,13,12)}^{7}\oplus\bS_{(22,21,5)}\oplus\bS_{(22,20,6)}^{4}\oplus\bS_{(22,19,7)}^{9}\oplus\bS_{(22,18,8)}^{15}\\
&\oplus\bS_{(22,17,9)}^{22}\oplus\bS_{(22,16,10)}^{21}\oplus\bS_{(22,15,11)}^{23}\oplus\bS_{(22,14,12)}^{12}\oplus\bS_{(22,13,13)}^{7}\oplus\bS_{(21,21,6)}\oplus\bS_{(21,20,7)}^{6}\oplus\bS_{(21,19,8)}^{13}\\
&\oplus\bS_{(21,18,9)}^{20}\oplus\bS_{(21,17,10)}^{27}\oplus\bS_{(21,16,11)}^{27}\oplus\bS_{(21,15,12)}^{22}\oplus\bS_{(21,14,13)}^{12}\oplus\bS_{(20,20,8)}^{5}\oplus\bS_{(20,19,9)}^{14}\oplus\bS_{(20,18,10)}^{21}\\
&\oplus\bS_{(20,17,11)}^{28}\oplus\bS_{(20,16,12)}^{26}\oplus\bS_{(20,15,13)}^{21}\oplus\bS_{(20,14,14)}^{4}\oplus\bS_{(19,19,10)}^{9}\oplus\bS_{(19,18,11)}^{17}\oplus\bS_{(19,17,12)}^{22}\oplus\bS_{(19,16,13)}^{19}\\
&\oplus\bS_{(19,15,14)}^{13}\oplus\bS_{(18,18,12)}^{7}\oplus\bS_{(18,17,13)}^{15}\oplus\bS_{(18,16,14)}^{10}\oplus\bS_{(18,15,15)}^{6}\oplus\bS_{(17,17,14)}^{5}\oplus\bS_{(17,16,15)}^{5}
\end{align*}

}
\end{document}